\newtheorem{theorem}{Theorem}
\newtheorem{definition}[theorem]{Definition}
\newtheorem{example}[theorem]{Example}
\newtheorem{lemma}[theorem]{Lemma}
\newtheorem{proposition}[theorem]{Proposition}
\newtheorem{remark}[theorem]{Remark}
\newenvironment{proof}[1][Proof]{\noindent\textbf{#1.} }{\ \rule{0.5em}{0.5em}}
\providecommand{\U}[1]{\protect\rule{.1in}{.1in}}
\newdimen\dummy
\begin{document}

\title{Some overview on unbiased interpolation \\and extrapolation designs}
\author{Michel Broniatowski$^{(1)}$ and Giorgio Celant$^{(2,\ast)}$\\$^{(1)}$LSTA, Universit\'{e} Pierre et Marie Curie, Paris, France\\$^{(2,\ast)}$Corresponding author, Dipartimento di Scienze Statistiche, \\Universit\`{a} degli Studi di Padova, Italy}
\maketitle

\begin{abstract}
This paper considers the construction of optimal designs due to Hoel and
Levine and Guest.\ It focuses on the relation between the theory of the
uniform approximation of functions and the optimality of the designs. Some
application to accelerated tests is also presented. The multivariate case is
also handled in some special situations.

AMS Classification: 62K05, 62N04

Key words: Optimal design, Extrapolation, Interpolation, Lagrange polynomials,
Legendre polynomials, Chebyshev points
\end{abstract}

\section{Introduction}

\subsection{Definition of the model and of the estimators}

We assume that we are given an interval where the explanatory variable $x$
takes its value; for simplicity assume that $x$ belongs to $\left[
-1,1\right]  .$ On this interval the response \ $Y$ can be observed. An
additive noise causes $Y$ to be only partly related to the input $x.$ This
noise is assumed to be independent upon the value of $x,$ which is commonly
referred to as a homoscedastic hypothesis on the model. For a given input $x$
the measure $Y(x)$ can be written as
\[
Y(x)=f(x)+\varepsilon
\]
where $f$ is some unknown function and the generic real valued random variable
$\varepsilon$ has some unknown distribution; however it is assumed that the
two first moment of $\varepsilon$ are finite.\ The function $f$ might be
defined on a larger interval than $\left[  -1,1\right]  .$ All possible
measurements of $f$ can only be achieved on $\left[  -1,1\right]  .$ It may
occur that we are interested in some estimation of $f(x)$ for some $x$ where
$f$ is not measured; when $x$ belongs to $\left[  -1,1\right]  $ this is an
interpolation problem. At times we may be interested in some approximation of
$f(x)$ for $x$ outside $\left[  -1,1\right]  $; this is an extrapolation problem.

We will discuss optimal designs in those contexts. Defining a design results
in a two fold description. Firstly it is based on a set of measurements
points, say $x_{0},..,x_{g-1}$ in $\left[  -1,1\right]  .$ Those are the nodes
of the design. Next, for any node $x_{j}$, we define an integer $n_{j}$ which
is the number of replicates of the measurement performed under the condition
$x_{j}.$ We thus inherit of the $n_{j}$ measurements $Y_{1}(x_{j}%
),..,Y_{n_{j}}(x_{j}).$ Those measurements are supposed to be independent.
Note that we do not assume any probabilistic structure on the $x_{j}$'s which
therefore will not be considered as sampled under any device. The $x_{j}$'s
are determined by the experimenter and their choice will follow from a
strictly deterministic procedure.

Obviously this simple model will allow for a simple estimate of $f(x_{j})$ for
all $j,$ assuming without loss of generality that the error $\varepsilon$ has
expectation $0.$

The design is therefore defined by the family of the nodes (their number $g$
is fixed by the experimenter), and the so called frequencies $n_{j}$'s, $0\leq
j\leq g-1.$.

Obviously the total number of experiments is limited, for reasons which have
to do with the context of the study. Those reasons might be related to the
cost of each individual experiment, or by other considerations. For example in
phase 1 clinical trials it is usually assumed that only very few patients can
be eligible for the trial. Call $n$ this number of trials to be performed. The
resulting constraint on the $n_{j}$'s is therefore%
\[
n_{0}+..+n_{g-1}=n.
\]
Let us now define formally this model.

For any $i=0,..,g-1$, $x_{i}$ is a node and $y_{k}(x_{i})$ is the $k-$th measurement of $Y(x_{i})$ when $k$ runs in $1,..,n_{i}.$ Therefore

\[
\left\{
\begin{array}
[c]{c}%
y_{1}\left(  x_{i}\right)  =f\left(  x_{i}\right)  +\varepsilon_{1,i}\\
......................\\
y_{n_{i}}\left(  x_{i}\right)  =f\left(  x_{i}\right)  +\varepsilon_{n_{i},i}%
\end{array}
\right.
\]
where $n_{i}>1$ and $n_{i}\in\mathbb{N},$ together with $n:=\sum_{i=0}^{g-1}n_{i}$ where $n$ is fixed$.$ Obviously
the r.v's $\varepsilon_{j,i}$, $1\leq j\leq n_{i}$ , $i=0,...,g-1,$ are not
observed. They are i.i.d. copies of a generic r.v. $\varepsilon.$ Furthermore $E\left(  \varepsilon\right)  =0$, $var\left(  \varepsilon\right)  =\sigma^{2}.$

We assume that $f$ is a polynomial with known degree $g-1$ $.$ Therefore it is
completely determined if known the values of $f$ in $g$ distinct points. Note
that the knowledge of $g$ is an important and strong assumption. Denote
further
\[
I:=\left\{  x_{0}<...<x_{g-1}\right\}  \subset\left[  -1,1\right]
\]
the family of nodes.

The aim of this chapter is to discuss the operational choice of the design; we
will thus propose some choices for the nodes and the so-called frequencies
$n_{j}/n$ which, all together , define the design. This will be achieved
discussing some notion of optimality.\\

When no random effect is present, existence and uniqueness of the solution of
the linear system with $g$ equations and $g$ variables $\theta:=\left(\theta_{0},...,\theta_{g-1}\right),$
\[
y\left(  x_{i}\right)  =\sum_{j=0}^{g-1}\theta_{j}x_{i}^{j},\left(
x_{0},...,x_{g-1}\right)  \in\left[  -1,1\right]  ^{g},0\leq i\leq g-1
\]
allow to identify the function $f$ at any $x$ in $%
%TCIMACRO{\U{211d} }%
%BeginExpansion
\mathbb{R}
%EndExpansion
.$ Changing the canonical basis in $P_{g-1}\left(  X\right)  $ into the family
of \ the elementary Lagrange polynomials%
\begin{equation}
l_{i}\left(  x\right)  :=%
%TCIMACRO{\tprod \limits_{j=0,j\neq i}^{g-1}}%
%BeginExpansion
{\textstyle\prod\limits_{j=0,j\neq i}^{g-1}}
%EndExpansion
\frac{x-x_{j}}{x_{i}-x_{j}}\label{PolElemLagrange}%
\end{equation}
yields%
\[
f(x)=\sum_{i=0}^{g-1}f(x_{i})l_{i}(x).
\]

In the present random setting, $f(x_{i})$ is unknown. This suggests
to consider the estimator $\widehat{f}$ of $f$ defined by%

\[
\mathcal{L}_{n}(\widehat{f})(x):=\sum_{i=0}^{g-1}\widehat{f\left(
x_{i}\right)  }\text{ }l_{i}(x),
\]

where $\widehat{f\left(  x_{i}\right)  }$ \ denotes some estimate of $f$ on a
generic node $x_{i}.$

Turn to the estimate of $f(x_{i})$ namely the simplest one, defined by
\[
\widehat{f\left(  x_{i}\right)  }:=\overline{Y}\left(  x_{i}\right)
:=\frac{1}{n_{i}}\sum_{j=1}^{n_{i}}Y_{j}\left(  x_{i}\right)  ,
\]
which solves%

\[
\widehat{f\left(  x_{i}\right)  }=\arg\min_{\mu\in\mathbb{R}}\sum_{j=1}%
^{n_{i}}\left(  Y_{j}\left(  x_{i}\right)  -\mu\right)  ^{2},
\]
provides the optimal linear unbiased estimator of $f\left(  x_{i}\right)  .$
It follows that $\ \widehat{f\left(  x\right)  }:=\mathcal{L}_{n}%
(\widehat{f})(x)$ is unbiased since for all $x\in%
%TCIMACRO{\U{211d} }%
%BeginExpansion
\mathbb{R}
%EndExpansion
,$%
\[
E\left(  \mathcal{L}_{n}(\widehat{f})(x)\right)  :=\sum_{i=0}^{g-1}E\left(
\widehat{f\left(  x_{i}\right)  }\right)  l_{i}(x)=\mathcal{L}_{n}%
(f)(x)=f\left(  x\right)  .
\]

Since $\mathcal{L}_{n}(\widehat{f})(x)$ is linear with respect to the
parameters $f\left(  x_{i}\right)  ,$ $i=0,...,g-1,$ using Gauss Markov Theorem, $\mathcal{L}_{n}(\widehat{f})(x)$ is optimal, i.e. has minimal variance. The variance of the estimator $\mathcal{L}_{n}(\widehat{f})(x)$ is
\begin{equation}
var\left(  \mathcal{L}_{n}(\widehat{f})(x)\right)  =var\left(  \sum
_{i=0}^{g-1}var\left(  \widehat{f\left(  x_{i}\right)  }\right)  \left(
l_{i}(x)\right)  ^{2}\right)  =\sigma^{2}\sum_{i=0}^{g-1}\frac{\left(
l_{i}(x)\right)  ^{2}}{n_{i}},\label{varx}%
\end{equation}
which depends explicitly on the frequency $n_{i}$ of the observations of $f$ on the nodes $x_{i}$'s.

We now proceed to the formal definition of a design. The set
\[
\left\{  \left(  \left(  n_{0},...,n_{g-1}\right)  ,\left(  x_{0}%
,...,x_{g-1}\right)  \right)  \in%
%TCIMACRO{\U{2115} }%
%BeginExpansion
\mathbb{N}
%EndExpansion
^{g}\times\left[  -1,1\right]  ^{g-1}:n:=\sum_{i=1}^{g-1}n_{i},\text{ }n\text{
fixed}\right\}
\]
determines a discrete probability measure $\xi$ with support $I,$ a finite
subset in $\left[  -1,1\right]  ,$ by%
\[
\xi\left(  x_{i}\right)  :=\frac{n_{i}}{n},\text{ }i=0,...,g-1.
\]

Turning to (\ref{varx}) we observe that the accuracy of the design \ depends
on the point $x$ where the variance of $\mathcal{L}_{n}(\widehat{f})(x)$ is calculated.\\

Since all estimators of the form $\mathcal{L}_{n}(\widehat{f})(x)$ are
unbiased, their accuracy depend only on their variance, which in turn depends
both on $x$ and on the measure $\xi.$ Optimizing on $\xi$ for a given $x$
turns to an optimal choice for $I$ and for the family of the $n_{i} $'s, under
the constraint
\[
n_{0}+..+n_{g-1}=n.
\]

Such designs $\xi_{x}$ are called \ Hoel-Levine extrapolation designs when $x$
lies outside $\left[  -1,1\right]  $. When $x$ belongs to $\left[
-1,1\right]  $ then clearly the optimal design for the criterion of the
variance of the estimator of $f(x)$ results in performing all the $n$
measurements at point $x.$ The non trivial case is when the optimality is
defined through a control of the uniform variance of the estimator of $f(x)$,
namely when $\xi$ should minimize%
\[
\sup_{x\in\left[  -1,1\right]  }var_{\xi}\mathcal{L}_{n}(\widehat{f})(x).
\]
Those designs $\xi$ are called interpolation designs, or Guest or Legendre designs.\\

The notation to be kept is as follows. The set of all probability measures on
the interval $\left[  -1,1\right]  $ supported by $g$ distinct points in the
interval $\left[  -1,1\right]  $ is denoted $\mathcal{M}_{\left[  -1,1\right]
}^{\ast}$, which therefore is the class of all designs.

The purpose of this paper is to present a unified view on this classical field
which links the theory of the uniform approximation of functions and the
statistical theory of experimental designs, unifying notation and concepts.
Its content comes mostly from the seminal works by Hoel and Levine
\cite{Hoel1964}, Guest \cite{Guest1958}. \ Some other important references in
the context is Kiefer and Wolfowicz \cite{Kiefer1964} and Studden
\cite{Studden1968}.

\subsection{Some facts from the theory of the approximation of
functions}

We briefly quote some basic facts from classical analysis, to be used
extensively. For fixed $n$ let \ $\mathcal{P}_{n}$ denote the class of all
polynomials with degree less or equal $n$ defined on $\left[  -1,1\right]  $
and let $P_{n}(x):=a_{0}+...+a_{n}x^{n}$. Some coordinates of the vector of
coefficients $\left(  a_{0},...,a_{n}\right)  $ can take value $0.$

For a continuous function $f$ defined on $\left[  -1,1\right]  $ denote
\[
e:=f-P_{n}%
\]
which is a continuous function on $\left[  -1,1\right]  ,$ as is $\left\vert
e\right\vert .$

Applying Weierstrass Theorem it follows that $\left\vert e\right\vert $
attains its maximal value in $\left[  -1,1\right]  $, for at least one point $x.$ We denote
\[
\mu=\mu\left(  a_{0},a_{1},...,a_{n}\right)  :=\max_{x\in\left[  -1,1\right]
}\left\vert e\left(  x\right)  \right\vert \geq0
\]
which yields to define
\begin{equation}
\alpha:=\inf_{\left(  a_{0},a_{1},...,a_{n}\right)  }\mu\left(  a_{0}%
,a_{1},...,a_{n}\right)  ,\label{alfa}
\end{equation}
the minimum of the uniform error committed substituting $f$ by a polynomial in
$\mathcal{P}_{n}$ . This lower bound exists.

\begin{definition}
A polynomial $P^{\ast}$ in $\mathcal{P}_{n}$ with coefficients $\left(
a_{0}^{\ast},...,a_{n}^{\ast}\right)  $ such that
\[
\sup_{x\in\left[  -1,1\right]  }\left\vert f\left(  x\right)  -P^{\ast}\left(
x\right)  \right\vert =\alpha\text{,}%
\]
where $\alpha$ is defined in (\ref{alfa}) is called a best approximating
polynomial of $f$ with degree $n$ in the uniform sense$.$
\end{definition}

This best approximation thus satisfies
\[
P^{\ast}\left(  x\right)  :=\arg\inf_{P_{n}\in\mathcal{P}_{n}}\sup
_{x\in\left[  -1,1\right]  }\left\vert e\left(  x\right)  \right\vert .
\]

The polynomial $P^{\ast}$ may be of degree less than $n.$ Define
\begin{equation}
e^{\ast}\left(  x\right)  :=f\left(  x\right)  -P^{\ast}\left(  x\right)
\label{Phi_n^*}
\end{equation}
and
\begin{equation}
E\left(  f\right)  :=\max_{x\in\left[  -1,1\right]  }\left\vert e^{\ast
}\left(  x\right)  \right\vert .\label{E_n(f)}%
\end{equation}

\subsubsection{Existence of the best approximation}

The following result answers the question of attainment for the least uniform
error when approximating $f$ by a polynomial in $\mathcal{P}_{n}$.

\begin{theorem}
\label{ThmExistenceApprox}Let $f$ be some continuous function defined on
$\left[  -1,1\right]  .$ For any integer $n$ there exists a unique $P^{\ast}$
in $\mathcal{P}_{n}$ such that $\left\Vert f-P^{\ast}\right\Vert _{\infty
}=\inf_{P_{n}\in\mathcal{P}_{n}}$ $\left\Vert f-P_{n}\right\Vert _{\infty}. $
\end{theorem}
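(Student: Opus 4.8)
The plan is to split the statement into an existence part and a uniqueness part, handled by quite different tools.

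For existence, I would reduce the problem to a finite-dimensional minimization and invoke compactness. Identifying $\mathcal{P}_n$ with $\mathbb{R}^{n+1}$ through the coefficient vector $\left( a_0,\dots,a_n\right)$, consider the map $\mu$ already introduced in the text, $\mu\left(a_0,\dots,a_n\right)=\max_{x\in\left[-1,1\right]}\left\vert f(x)-P_n(x)\right\vert$. First I would check that $\mu$ is continuous: since $\left\vert x\right\vert\le 1$ on the interval, one has $\left\vert\mu(a)-\mu(b)\right\vert\le\left\Vert P_a-P_b\right\Vert_\infty\le\sum_{j}\left\vert a_j-b_j\right\vert$, so $\mu$ is in fact Lipschitz in the coefficients. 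Next, taking $P_n\equiv 0$ shows $\alpha\le\left\Vert f\right\Vert_\infty=:M$, so it suffices to minimize over the sublevel set $\left\{a:\mu(a)\le M\right\}$. On this set $\left\Vert P_a\right\Vert_\infty\le 2M$, and since all norms on the finite-dimensional space $\mathcal{P}_n$ are equivalent, the coefficient vector is confined to a bounded set; being also closed by continuity of $\mu$, it is compact. The extreme value theorem then yields a minimizer $P^{\ast}$, which is the desired best approximation.

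For uniqueness, the key structural input is the Haar (Chebyshev) property of $\mathcal{P}_n$: a nonzero polynomial of degree $\le n$ has at most $n$ roots in $\left[-1,1\right]$. Suppose $P_1$ and $P_2$ are both best approximations, so $\left\Vert f-P_1\right\Vert_\infty=\left\Vert f-P_2\right\Vert_\infty=\alpha$, and set $Q:=(P_1+P_2)/2\in\mathcal{P}_n$. By the triangle inequality $\left\Vert f-Q\right\Vert_\infty\le\alpha$, while $\alpha$ is the infimum, so $Q$ is itself a best approximation with $\left\Vert f-Q\right\Vert_\infty=\alpha$. At any point $x^{\ast}$ where $\left\vert f(x^{\ast})-Q(x^{\ast})\right\vert=\alpha$, the two numbers $f(x^{\ast})-P_1(x^{\ast})$ and $f(x^{\ast})-P_2(x^{\ast})$ both have modulus at most $\alpha$ yet average to something of modulus $\alpha$; the equality case of the triangle inequality forces them to coincide, whence $P_1(x^{\ast})=P_2(x^{\ast})$.

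It remains to produce enough such coincidence points, and this is where the main obstacle lies: I must know that the error of a best approximation attains its extreme value $\pm\alpha$ at sufficiently many points. The clean statement is the equioscillation (Chebyshev alternation) theorem, which guarantees that $f-Q$ equioscillates at no fewer than $n+2$ points of $\left[-1,1\right]$. Granting this, $P_1$ and $P_2$ agree at those $n+2\ge n+1$ points, so $P_1-P_2\in\mathcal{P}_n$ is a polynomial of degree $\le n$ with at least $n+1$ distinct roots; by the Haar property it vanishes identically, i.e. $P_1=P_2$. The delicate point is thus the alternation theorem itself; should it not yet be available, I would instead prove directly the weaker fact actually needed, namely that a best approximation whose error attains $\pm\alpha$ at fewer than $n+2$ points can be strictly improved. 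One constructs a correcting polynomial in $\mathcal{P}_n$ that is positive precisely on the set of "high" extremal points and negative on the set of "low" ones, which is possible exactly because $\mathcal{P}_n$ is a Chebyshev system, and then subtracts a small multiple of it from $Q$ to lower the uniform error, contradicting optimality.
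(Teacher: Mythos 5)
Your proof is correct, and it follows exactly the route the paper has in mind: the paper itself gives no proof of this theorem (it defers to the cited references), but it explicitly states that uniqueness ``strongly relies'' on the Borel--Chebyshev equioscillation theorem, which is precisely the pivot of your uniqueness argument (midpoint trick, equality case of the triangle inequality at the $n+2$ alternation points, then the bound on the number of roots of a polynomial in $\mathcal{P}_n$). Your existence argument by continuity of $\mu$ on the coefficient space, restriction to a compact sublevel set via equivalence of norms on the finite-dimensional space $\mathcal{P}_n$, and the extreme value theorem is the standard one and is sound; the only (trivial) case worth a parenthetical remark is $\alpha=0$, where uniqueness is immediate without invoking alternation.
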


\subsubsection{Uniqueness of the best approximation}

The proof of the uniqueness Theorem strongly relies on a Theorem by Chebyshev
which explores the number of changes of the sign of the error, which we state
now. Consider $P^{\ast}\left(  x\right)  $ and $E\left(  f\right)  $ as
defined in (\ref{Phi_n^*}) and (\ref{E_n(f)}).

\begin{theorem}
\label{ThmBorel-Tchebicheff}(Borel-Chebyshev) \ \ \ \ \ A polynomial $P^{\ast
}$ in $\mathcal{P}_{n}$ is a best uniform approximation of a function $f$ in
$\mathcal{C}^{\left(  0\right)  }\left[  -1,1\right]  $ \ if and only if the
function $x\rightarrow e^{\ast}\left(  x\right)  $ equals $E(f)$ with
alternating signs for at least $n+2$ values of $x$ in $\left[  -1,1\right]  .$
\end{theorem}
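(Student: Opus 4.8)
The plan is to prove the two implications of the equivalence separately, treating the nontrivial case $E(f) > 0$; when $E(f) = 0$ the function $f = P^*$ already lies in $\mathcal{P}_n$ and both sides of the equivalence hold trivially. Throughout I write $E := E(f)$ and $e^* := f - P^*$, and I use only the continuity of $e^*$ (hence attainment of the values $\pm E$ on compact subsets of $[-1,1]$, via Weierstrass), the intermediate value theorem, and the elementary fact that a nonzero polynomial of degree at most $n$ has at most $n$ roots.

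For the sufficiency direction (alternation implies best approximation) I would argue by contradiction. Suppose $e^*$ takes the values $\pm E$ with alternating signs at points $x_0 < x_1 < \cdots < x_{n+1}$, and suppose some $Q \in \mathcal{P}_n$ satisfied $\|f - Q\|_\infty < E$. Writing
\[
P^*(x) - Q(x) = \bigl(f(x) - Q(x)\bigr) - e^*(x),
\]
at each node $x_k$ the term $f(x_k) - Q(x_k)$ has absolute value strictly less than $E = |e^*(x_k)|$, so the sign of $P^*(x_k) - Q(x_k)$ coincides with that of $-e^*(x_k)$ and therefore alternates across the $n+2$ nodes. The intermediate value theorem then forces $P^* - Q$ to vanish at least $n+1$ times in $[-1,1]$; since $P^* - Q \in \mathcal{P}_n$, it must be identically zero, contradicting $\|f - Q\|_\infty < E = \|f - P^*\|_\infty$. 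Hence no strictly better approximation exists.

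For the necessity direction (best approximation implies alternation) I would again argue by contradiction, assuming $P^*$ optimal but that the maximal number of alternating extremal points is some $\ell \le n+1$. The idea is to construct a perturbation that strictly lowers the uniform error. Using continuity of $e^*$ and a covering argument on the compact extremal sets $\{x : e^*(x) = E\}$ and $\{x : e^*(x) = -E\}$, I would produce interior points $t_1 < \cdots < t_r$ with $r \le n$ separating $[-1,1]$ into consecutive blocks on which the attained extremal value keeps a single sign, this sign alternating from block to block. Setting
\[
q(x) := c\prod_{j=1}^{r}(x - t_j), \qquad c \in \{-1,+1\},
\]
with $c$ chosen so that $q$ has the same sign as $e^*$ on every extremal block, one has $\deg q = r \le n$, so $P^* + \delta q \in \mathcal{P}_n$. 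For $e^* - \delta q$ the value $E$ is strictly decreased near the extremal points, while a compactness estimate on the region where $|e^*| < E$ shows that for all sufficiently small $\delta > 0$ one obtains $\|f - (P^* + \delta q)\|_\infty < E$, contradicting optimality. This forces at least $n+2$ alternations.

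The main obstacle is the construction of the separating points $t_1, \ldots, t_r$ and the verification that their number $r$ does not exceed $n$ precisely when fewer than $n+2$ alternations are present: this is the combinatorial core of the argument, requiring one to cover the extremal set by finitely many intervals on which $e^*$ remains close to a single extreme value, order these intervals, and count sign changes, invoking the bound $\ell \le n+1$ to guarantee $r \le n$. Once $q$ is available, the selection of $\delta$ is a routine compactness argument.
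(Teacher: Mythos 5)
The paper does not actually prove Theorem \ref{ThmBorel-Tchebicheff}: it states the result and defers to the cited references (Dzyadyk, Rynne--Youngson, Kolmogorov), so there is no in-paper argument to compare against. On its own merits, your proposal is the classical equioscillation proof and it is sound. The sufficiency direction is complete and correct: the de la Vall\'ee Poussin--style sign argument on $P^{\ast}-Q=(f-Q)-e^{\ast}$ gives $n+1$ sign changes, hence $n+1$ zeros of a polynomial of degree at most $n$, forcing $P^{\ast}=Q$ and the contradiction. The necessity direction is a correct outline of the standard perturbation argument, and you have rightly identified its only delicate point: the separating points $t_{1}<\dots<t_{r}$ must be chosen in the gaps between consecutive extremal blocks, at locations where $\left\vert e^{\ast}\right\vert<E$ strictly (such points exist by the intermediate value theorem, since $e^{\ast}$ passes from $+E$ to $-E$ between blocks); otherwise a zero of $q$ could land on an extremal point and the norm would not strictly decrease. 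With that choice, the count $r=\ell-1\leq n$ under the assumption $\ell\leq n+1$ alternations, the sign-matching of $q=c\prod_{j}(x-t_{j})$ with $e^{\ast}$ on each block, and the compactness estimate on $\left\{x:\left\vert e^{\ast}(x)\right\vert\leq E-\epsilon\right\}$ for small $\delta$ all go through exactly as you describe. To turn the sketch into a full proof you would only need to write out the block decomposition carefully (finitely many closed extremal subintervals, ordered, with the sign constant on each), but no new idea is required.
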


\begin{theorem}
\label{TmmUnicite}Let $f$ be a continuous function defined on $\left[
-1,1\right]  $.The best uniform approximating polynomial is unique.
\end{theorem}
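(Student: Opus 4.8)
The plan is to obtain uniqueness from the Borel--Chebyshev characterization (Theorem~\ref{ThmBorel-Tchebicheff}) together with the convexity of the approximation problem. Existence of at least one best approximation is already granted by Theorem~\ref{ThmExistenceApprox}; what remains is to show that two best approximations must coincide.

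So suppose $P^{\ast}$ and $Q^{\ast}$ both belong to $\mathcal{P}_{n}$ and both attain the minimal uniform error, $\left\Vert f-P^{\ast}\right\Vert _{\infty}=\left\Vert f-Q^{\ast}\right\Vert _{\infty}=E(f)$. First I would form their average $R:=\tfrac{1}{2}\left(P^{\ast}+Q^{\ast}\right)$, which again lies in $\mathcal{P}_{n}$. Writing $f-R=\tfrac{1}{2}(f-P^{\ast})+\tfrac{1}{2}(f-Q^{\ast})$ and applying the triangle inequality for the sup-norm gives $\left\Vert f-R\right\Vert _{\infty}\leq E(f)$; since $E(f)$ is the infimum over $\mathcal{P}_{n}$, equality must hold, so $R$ is itself a best approximation.

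Next I would invoke Theorem~\ref{ThmBorel-Tchebicheff} applied to $R$: the error $f-R$ must equal $E(f)$ with alternating signs at some $n+2$ points $x_{0}<\cdots<x_{n+1}$ in $\left[-1,1\right]$. At each such $x_{k}$ the value $(f-R)(x_{k})=\pm E(f)$ is the average of $(f-P^{\ast})(x_{k})$ and $(f-Q^{\ast})(x_{k})$, each of which has modulus at most $E(f)$. The decisive elementary remark is that if two reals lying in $\left[-E(f),E(f)\right]$ have average equal to $\pm E(f)$, then both of them equal $\pm E(f)$; hence $(f-P^{\ast})(x_{k})=(f-Q^{\ast})(x_{k})$, that is $P^{\ast}(x_{k})=Q^{\ast}(x_{k})$, for every one of the $n+2$ points.

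To conclude, $P^{\ast}-Q^{\ast}$ is a polynomial of degree at most $n$ vanishing at $n+2$ distinct points; a nonzero element of $\mathcal{P}_{n}$ has at most $n$ roots, so $P^{\ast}-Q^{\ast}\equiv 0$ and $P^{\ast}=Q^{\ast}$. I expect the only subtle step to be the passage from the equioscillation of the averaged error to the pointwise agreement of the two candidates, i.e.\ the convexity/extremality observation about averages attaining the extreme value; the final root-counting is routine.
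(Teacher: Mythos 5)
Your proof is correct and complete. Note, however, that the paper itself does not actually prove Theorem~\ref{TmmUnicite}: it only remarks that the proof ``strongly relies on'' the Borel--Chebyshev theorem and then refers the reader to the literature. Your argument is precisely the classical one that fills this gap: the midpoint $R=\tfrac{1}{2}(P^{\ast}+Q^{\ast})$ is again a best approximation by convexity of the sup-norm, Theorem~\ref{ThmBorel-Tchebicheff} forces $f-R$ to attain $\pm E(f)$ at $n+2$ alternation points, the extremality observation (two numbers in $[-E(f),E(f)]$ averaging to $\pm E(f)$ must both equal it) gives $P^{\ast}(x_{k})=Q^{\ast}(x_{k})$ at all $n+2$ points, and root-counting for a degree-$\le n$ polynomial finishes the job. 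Each step is sound, including the degenerate case $E(f)=0$, where the same averaging remark (or simply $\left\Vert f-P^{\ast}\right\Vert _{\infty}=\left\Vert f-Q^{\ast}\right\Vert _{\infty}=0$) yields $P^{\ast}=Q^{\ast}$ directly. This is exactly the route the paper's surrounding remarks anticipate, so there is nothing to object to.
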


For a complete treatment of the above arguments, see e.g. \cite{Dzyadyk2008},
\cite{Rynne2008} and \cite{Kolmogorov1981}.

\section{Optimal extrapolation designs; Hoel Levine or Chebyshev designs}

We consider the problem of approximating $f(x)$ for any fixed $x$ in the
interval $\left[  c,-1\right]  $ for some $c<-1.$ More generally the optimal
design which is obtained in this section is valid for any $c$ such that
$\left\vert c\right\vert >1.$

As seen previously, since $\mathcal{L}_{n}(\widehat{f})(x)$ is an unbiased
estimate of $f(x)$ a natural criterion for optimality in the class of all
unbiased linear estimates is the variance.\\

We therefore consider the problem
\[
\xi_{x}^{\ast}:=\arg\min_{\xi\in\mathcal{M}_{\left[  -1,1\right]  }^{\ast}%
}var\left(  \mathcal{L}_{n}(\widehat{f})(x)\right)
\]
to which we will propose a suboptimal solution.

Denoting generically $\mathbf{n}^{\ast}:=\left(  n_{0}^{\ast},...,n_{g-1}%
^{\ast}\right)  $ and $(x^{\ast}):=\left(  x_{0}^{\ast},...,x_{g-1}^{\ast
}\right)  $ , $\mathbf{n}:=\left(  n_{0},...,n_{g-1}\right)  \in$ $%
%TCIMACRO{\U{2115} }%
%BeginExpansion
\mathbb{N}
%EndExpansion
^{g}$ and $(x):=\left(  x_{0},...,x_{g-1}\right)  $ this problem can thus be written as
\[
\left\{
\begin{array}
[c]{c}%
\left(  \mathbf{n}^{\ast},(x^{\ast})\right)  =\arg\min_{\left(  \mathbf{n}%
,\left(  x\right)  \right)  }\sum_{i=0}^{g-1}\frac{\left(  l_{i}(x)\right)
^{2}}{n_{i}}\\
\mathbf{n}:\text{ }\sum_{i=0}^{g-1}n_{i}=n\text{, }n\text{ fixed }\\
-1\leq x_{0}<...<x_{g-1}\leq1,\left(  x\right)  \in\left[  -1,1\right]  ^{g}%
\end{array}
\right.  .
\]

This is an integer programming problem (w.r.t. $\mathbf{n}$) with an
inequality constraint in $
%TCIMACRO{\U{211d} }%
%BeginExpansion
\mathbb{R}
%EndExpansion
$. As commonly done we find a proxy to the solution, considering the $n_{i}$'s as real numbers. The resulting solution $\left(  n_{0}^{\ast},...,n_{g-1}^{\ast}\right)  $ will be substituted by the integer part of each of the $n_{i}^{\ast}$'s$.$

We therefore get to the following optimization problem in the $2g$ real variables
\begin{equation}
\left\{
\begin{array}
[c]{c}%
\left(  \left(  w_{0}^{\ast},...,w_{g-1}^{\ast}\right)  ,\left(  x_{0}^{\ast
},...,x_{g-1}^{\ast}\right)  \right)  =\arg\min_{\left(  \left(
w_{0},...,w_{g-1}\right)  ,\left(  x_{0},...,x_{g-1}\right)  \right)  }%
\sum_{i=0}^{g-1}\frac{\left(  l_{i}(x)\right)  ^{2}}{w_{i}}\\
\left(  w_{0},...,w_{g-1}\right)  \in%
%TCIMACRO{\U{211d} }%
%BeginExpansion
\mathbb{R}
%EndExpansion
^{g},w>0\text{, }\sum_{i=0}^{g-1}w_{i}=n\text{, }n\text{ fixed }\\
-1\leq x_{0}<...<x_{g-1}\leq1,\left(  x_{0},...,x_{g-1}\right)  \in\left[
-1,1\right]  ^{g}.
\end{array}
\right.  .\label{Opt1}
\end{equation}

Since the mapping
\[
\left(  \left(  w_{0},...,w_{g-1}\right)  ,\left(  x_{0},...,x_{g-1}\right)
\right)  \rightarrow\sum_{i=0}^{g-1}\frac{\left(  l_{i}(x)\right)  ^{2}}
{w_{i}}
\]
is continuous, the optimization problem (\ref{Opt1}) can be solved in a two steps procedure.

The principle is as follows: fix the vector \underline{$\mathbf{x}$}$:=\left(
x_{0},...,x_{g-1}\right)  ^{\text{ }\prime}$ and look for the minimum with respect to the vector \underline{$\mathbf{w}$}$:=\left(  w_{0},...,w_{g-1}%
\right)  ^{\text{ }\prime}$. Once obtained the optimizer
\underline{$\mathbf{w}$}$^{\ast}$ vary \underline{$\mathbf{x}$} and determine the resulting minimum value of the variance for fixed \underline{$\mathbf{w}$%
}$=$\underline{$\mathbf{w}$}$^{\ast}$.

Define therefore , with fixed \underline{$\mathbf{x}$}
\begin{equation}
\left(  2_{a}\right)  \left\{
\begin{array}
[c]{c}%
\min_{\left(  w_{0},...,w_{g-1}\right)  \in%
%TCIMACRO{\U{211d} }%
%BeginExpansion
\mathbb{R}
%EndExpansion
^{g}}\sum_{i=0}^{g-1}\frac{\left(  l_{j}\left(  x\right)  \right)  ^{2}}%
{w_{j}}\\
\sum_{i=0}^{g-1}w_{i}=n\text{, }n\text{ fixed}%
\end{array}
\right.  .\label{Pb1}%
\end{equation}

Denote $\left(  w_{0}^{\ast},...,w_{g-1}^{\ast}\right)  $ the
solution of this problem.\\

The optimal design $\xi^{\ast}$ results then as the solution of the following
problem, assuming in the present case that $x<x_{0}$
\begin{equation}
\left(  2_{b}\right)  \left\{
\begin{array}
[c]{c}%
\min_{\left(  x_{0},...,x_{g-1}\right)  \in%
%TCIMACRO{\U{211d} }%
%BeginExpansion
\mathbb{R}
%EndExpansion
^{g}}\sum_{i=0}^{h}\frac{\left(  l_{j}\left(  x\right)  \right)  ^{2}}%
{w_{j}^{\ast}}\\
x<-1<x_{0}<...<x_{g-1},\left\{  x_{0},...,x_{g-1}\right\}  \subset\left[
-1,1\right]
\end{array}
\right.  .\label{Pb2}%
\end{equation}

\textbf{Step 1.} We handle\textbf{\ }Problem (\ref{Pb1}).

\begin{proposition}
The solution of problem (\ref{Pb1}) exists and is unique. It is given
by
\[
w_{j}^{\ast}:=n\frac{\left\vert l_{j}\left(  x\right)  \right\vert }%
{\sum_{j=0}^{g-1}\left\vert l_{j}\left(  x\right)  \right\vert }\text{,
}j=0,...,g-1.
\]
\end{proposition}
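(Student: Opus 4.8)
The plan is to solve this constrained minimization directly by a sharp inequality that delivers existence, uniqueness, and the explicit optimizer in one stroke. Write $a_i := \left\vert l_i(x)\right\vert$ for $i=0,\dots,g-1$. The very first observation I would record is that in the extrapolation setting we have $x < -1 \leq x_0$, so $x$ coincides with none of the nodes; consequently
\[
l_i(x) = \prod_{j=0,\,j\neq i}^{g-1}\frac{x-x_j}{x_i-x_j}\neq 0,
\]
and hence $a_i > 0$ for every $i$. This guarantees both that the objective $\sum_i a_i^2/w_i$ is genuinely defined on the open region $\{w\in\mathbb{R}^g : w_i>0,\ \sum_i w_i = n\}$ and that the candidate weights below are strictly positive, i.e. feasible.

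The core step is an application of the Cauchy--Schwarz inequality. Writing each summand as $\frac{a_i}{\sqrt{w_i}}\cdot\sqrt{w_i}$, I would obtain
\[
\left(\sum_{i=0}^{g-1} a_i\right)^2 = \left(\sum_{i=0}^{g-1}\frac{a_i}{\sqrt{w_i}}\,\sqrt{w_i}\right)^2 \leq \left(\sum_{i=0}^{g-1}\frac{a_i^2}{w_i}\right)\left(\sum_{i=0}^{g-1}w_i\right) = n\sum_{i=0}^{g-1}\frac{a_i^2}{w_i},
\]
using the constraint $\sum_i w_i = n$ in the last equality. Rearranging yields the universal lower bound
\[
\sum_{i=0}^{g-1}\frac{\left(l_i(x)\right)^2}{w_i} \geq \frac{1}{n}\left(\sum_{i=0}^{g-1}\left\vert l_i(x)\right\vert\right)^2,
\]
valid for every feasible $w$. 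Since the right-hand side is a fixed positive constant, this shows the objective is bounded below and identifies the value of the minimum, pending exhibition of a feasible point attaining it.

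To pin down the minimizer I would invoke the equality case of Cauchy--Schwarz: equality holds if and only if the vectors $\bigl(a_i/\sqrt{w_i}\bigr)_i$ and $\bigl(\sqrt{w_i}\bigr)_i$ are proportional, i.e. $w_i = a_i/\kappa$ for a common $\kappa>0$ and all $i$. Imposing $\sum_i w_i = n$ forces $\kappa = \frac{1}{n}\sum_{j=0}^{g-1} a_j$, so that
\[
w_i^{\ast} = n\,\frac{\left\vert l_i(x)\right\vert}{\sum_{j=0}^{g-1}\left\vert l_j(x)\right\vert},\qquad i=0,\dots,g-1,
\]
which is feasible (strictly positive and summing to $n$ by the positivity of the $a_i$) and attains the lower bound. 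Because the equality condition characterizes the optimizer up to the single scalar $\kappa$, which the constraint fixes uniquely, this simultaneously establishes existence and uniqueness.

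The point demanding the most care is the uniqueness claim, since the feasible set is the open simplex and is not compact, so a naive appeal to Weierstrass does not apply. The Cauchy--Schwarz route sidesteps this precisely because it produces one universal lower bound together with a complete description of its equality case, leaving exactly one feasible candidate. As a cross-check (and an alternative should a self-contained convexity argument be preferred), I would note that each map $w_i\mapsto \left(l_i(x)\right)^2/w_i$ is strictly convex on $(0,\infty)$, so the objective is strictly convex on the convex feasible region and coercive as any $w_i\to 0^{+}$; hence any stationary point of the Lagrangian $\sum_i a_i^2/w_i - \lambda\bigl(\sum_i w_i - n\bigr)$ is the unique global minimum, and solving $-a_i^2/w_i^2 = \lambda$ recovers the same $w_i \propto \left\vert l_i(x)\right\vert$ and the same formula.
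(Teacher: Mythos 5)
Your proof is correct, and it takes a genuinely different route from the paper's. The paper applies the Karush--Kuhn--Tucker conditions: it differentiates the Lagrangian $\sum_i (l_i(x))^2/w_i + \lambda(\sum_i w_i - n)$, obtains $\lambda = (l_j(x)/w_j)^2$, hence $w_j = |l_j(x)|/\sqrt{\lambda}$, and fixes $\lambda$ from the constraint. That computation identifies the stationary point but, as written, never verifies that a minimizer exists on the non-compact feasible set, that the stationary point is a global minimum rather than merely critical, or that it is unique. Your Cauchy--Schwarz argument delivers all three at once: the inequality $\sum_i a_i^2/w_i \geq \frac{1}{n}(\sum_i a_i)^2$ is a universal lower bound on the feasible set, and its equality case (proportionality of $(a_i/\sqrt{w_i})_i$ and $(\sqrt{w_i})_i$, legitimate because both vectors are nonzero thanks to $a_i = |l_i(x)| > 0$ for $x$ outside the nodes) leaves exactly one feasible candidate. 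Your explicit remark that $l_i(x)\neq 0$ in the extrapolation regime is a point the paper passes over silently, yet it is what makes the candidate weights strictly positive and hence feasible; if some $l_i(x)$ vanished, the infimum would not be attained on the open simplex. Your closing convexity-plus-coercivity observation is essentially the rigorous completion of the paper's Lagrangian route, so the two approaches reconnect there. What the paper's method buys is a template that generalizes mechanically to the later, less symmetric objectives (e.g. the bivariate case with $\alpha n(\underline{i})+\beta$ in the denominator), where a clean Cauchy--Schwarz identity is less immediate; what yours buys is a self-contained, airtight proof of the exact statement claimed, including the existence and uniqueness assertions.
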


\begin{proof}
Applying the Karush-Kuhn-Tucker Theorem (see e.g. \cite{Bazaraa2006}), we obtain
\[
\left\{
\begin{array}
[c]{c}%
\frac{\partial}{\partial w_{j}}\left(  \sum_{i=0}^{g-1}\frac{\left(
l_{i}\left(  x\right)  \right)  ^{2}}{w_{i}}+\lambda\left(  \sum_{i=0}%
^{g-1}w_{i}-n\right)  \right)  =0\\
\lambda\left(  \sum_{i=0}^{g-1}w_{i}-n\right)  =0\\
\lambda\geq0
\end{array}
\right.  ,
\]
\[
\frac{\partial}{\partial w_{j}}\left(  \sum_{i=0,i\neq j}^{g-1}\frac{\left(
l_{i}\left(  x\right)  \right)  ^{2}}{w_{j}}+\frac{\left(  l_{j}\left(
x\right)  \right)  ^{2}}{w_{j}}+\lambda\left(  \sum_{i=0.i\neq j}^{g-1}
w_{i}+w_{j}-n\right)  \right)  =0,
\]
\[
\lambda=\left(  \frac{l_{j}\left(  x\right)  }{w_{j}}\right)  ^{2},
\]
\[
w_{j}=\frac{\left\vert l_{j}\left(  x\right)  \right\vert }{\sqrt{\lambda}}.
\]

Since $\sum_{i=0}^{g-1}w_{i}=n$, we get
\[
\left(  \frac{l_{j}\left(  x\right)  }{w_{j}}\right)  ^{2}=\lambda=\left(
\frac{\sum_{j=0}^{g-1}\left\vert l_{j}\left(  x\right)  \right\vert }
{n}\right)  ^{2}.
\]

Finally we get the solution of the problem $\left(2_{a}\right)$, namely
\[
w_{j}^{\ast}:=n\frac{\left\vert l_{j}\left(  x\right)  \right\vert }%
{\sum_{j=0}^{g-1}\left\vert l_{j}\left(  x\right)  \right\vert }\text{,
}j=0,...,g-1.
\]
\end{proof}

\bigskip

\textbf{Step 2. }Solving Problem (\ref{Pb2}) is more tedious and requires some
technical arguments, which we develop now.

Substituting $n_{j}$ in (\ref{varx}) \ by $w_{j}^{\ast}$ the variance of
$\mathcal{L}_{n}(\widehat{f})(x)$ becomes%
\[
Var\mathcal{L}_{n}(\widehat{f})(x)=\sigma^{2}\left(  \sum_{i=0}^{g-1}%
\left\vert l_{i}\left(  x\right)  \right\vert \right)  ^{2}.
\]
Hence minimizing this variance under the nodes turns into the following problem
\begin{equation}
\left\{
\begin{array}
[c]{c}%
\min_{\left(  x_{0},...,x_{g-1}\right)  \in%
%TCIMACRO{\U{211d} }%
%BeginExpansion
\mathbb{R}
%EndExpansion
^{g}}\sum_{i=0}^{g-1}\left\vert l_{i}\left(  x\right)  \right\vert \\
x<x_{0}<...<x_{g-1},\left\{  x_{0},...,x_{g-1}\right\}  \subset\left[
-1,1\right]
\end{array}
\right. \label{PbOpt}%
\end{equation}
$.$

Since $x<x_{0}<...<x_{g-1},$ we have
\begin{equation}
\left\vert l_{i}\left(  x\right)  \right\vert =\left(  -1\right)  ^{i}%
l_{i}\left(  x\right)  .\label{(3)}%
\end{equation}

We consider a suboptimal solution to Problem (\ref{PbOpt}). Indeed no general
solution presently exists to the determination of a system of nodes which
minimizes the evaluation of the Lebesgue function $t\rightarrow\sum
_{i=0}^{g-1}\left\vert l_{i}\left(  t\right)  \right\vert $ at some fixed
point $t=x<-1.$ $\ $

Define
\begin{equation}
t\rightarrow T_{g-1}\left(  t\right)  :=\sum_{i=0}^{g-1}\left(  -1\right)
^{i}l_{i}\left(  t\right) \label{PolT}%
\end{equation}
a polynomial with degree $g-1$ defined on $\mathbb{R}$ $.$ This polynomial
does not depend any longer of $x$ but only on the nodes$.$ Up to the
multiplicative constant $\sigma$ it coincides with the standard deviation of
$\mathcal{L}_{n}(\widehat{f\text{ }})(x)$ when evaluated at point $x.$ We
provide the optimal choice of the nodes under this representation.

Observe the properties of $T_{g-1}$ on $\left[  -1,1\right]  .$ It holds%
\[
T_{g-1}(x_{j})=(-1)^{j}%
\]
for all $j$ between $0$ and $g-1.$ Hence $T_{g-1}$ takes its extreme values in
$g$ points; among those points, all interior ones in $\left[  -1,1\right]  $
are points where $T_{g-1}$ changes curvature. Those are obtained as the roots
of the first derivative of $T_{g-1};$ indeed $t\rightarrow Tq_{g-1}\left(
t\right)  $ is monotonous when $t$ does not belong to $\left[  -1,1\right]  .$
Hence we obtain that $g-2$ points among the $x_{i}$'s are interior points in
$\left[  -1,1\right]  .$ It follows that the two remaining ones are $-1$ and
$1.$

We now identify $T_{g-1}$ and therefore the nodes.

\bigskip

\begin{lemma}
The polynomial $T_{g-1}$ is the solution of the differential equation
\end{lemma}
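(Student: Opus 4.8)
The plan is to recognize $T_{g-1}$ as the Chebyshev polynomial of degree $g-1$ by proving it satisfies the Chebyshev differential equation
\[
(1-t^{2})\,T_{g-1}''(t)-t\,T_{g-1}'(t)+(g-1)^{2}\,T_{g-1}(t)=0,
\]
working only from the two structural facts already recorded above: that $T_{g-1}(x_{j})=(-1)^{j}$ for $j=0,\dots,g-1$ with $x_{0}=-1$ and $x_{g-1}=1$, and that the $g-2$ interior nodes $x_{1},\dots,x_{g-2}$ are critical points, i.e. $T_{g-1}'(x_{j})=0$ there.

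First I would analyse the auxiliary polynomial $1-T_{g-1}(t)^{2}$, which has degree $2(g-1)$. At each interior node $x_{j}$ both $1-T_{g-1}^{2}$ and its derivative $-2T_{g-1}T_{g-1}'$ vanish, the latter because $T_{g-1}'(x_{j})=0$; hence $x_{j}$ is a root of multiplicity at least two. At $t=\pm 1$ there is a root of multiplicity at least one. Since $2(g-2)+2=2(g-1)$ already exhausts the degree, these are \emph{all} the roots, with exactly these multiplicities, so $1-T_{g-1}^{2}=C(1-t^{2})\prod_{j=1}^{g-2}(t-x_{j})^{2}$ for some constant $C$. In parallel, $T_{g-1}'$ has degree $g-2$ and vanishes at the $g-2$ interior nodes, so $T_{g-1}'(t)=D\prod_{j=1}^{g-2}(t-x_{j})$, whence $(T_{g-1}'(t))^{2}$ carries the same squared product.

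Eliminating that common product gives the first-order relation $(1-t^{2})(T_{g-1}'(t))^{2}=K\bigl(1-T_{g-1}(t)^{2}\bigr)$ with $K=D^{2}/C$, and comparing the leading coefficients of the two sides (each of degree $2(g-1)$) forces $K=(g-1)^{2}$. Differentiating this identity yields $-2t(T_{g-1}')^{2}+2(1-t^{2})T_{g-1}'T_{g-1}''=-2(g-1)^{2}T_{g-1}T_{g-1}'$; since this is an identity between polynomials and $T_{g-1}'$ is not identically zero, I may cancel the common factor $2T_{g-1}'$ (the quotient polynomials agree off the finitely many zeros of $T_{g-1}'$, hence everywhere), which is exactly the claimed equation.

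I expect the bookkeeping of root multiplicities to be the delicate step: one must verify that the interior nodes genuinely contribute double roots and that $\pm 1$ contribute simple roots, the latter using $T_{g-1}'(\pm 1)\neq 0$, which holds because $T_{g-1}'$, being of degree $g-2$, has already placed all its zeros at the interior nodes. The cancellation of $T_{g-1}'$ after differentiation is the other point meriting care, but the polynomial-identity argument disposes of it cleanly.
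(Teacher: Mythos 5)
Your argument is correct and follows essentially the same route as the paper: both identify the roots of $1-T_{g-1}^{2}$ and of $(1-x^{2})\bigl(T_{g-1}'\bigr)^{2}$, conclude the two degree-$2(g-1)$ polynomials are proportional, and fix the constant $K=(g-1)^{2}$ by comparing leading coefficients; note only that the equation actually asserted in the lemma is the first-order relation $1-T_{g-1}^{2}=\frac{1}{(g-1)^{2}}(1-x^{2})\bigl(T_{g-1}'\bigr)^{2}$, which you obtain as your intermediate identity, so the subsequent differentiation to the second-order Chebyshev equation is extra rather than the target. Your explicit bookkeeping of multiplicities (double roots at the interior nodes, simple roots at $\pm 1$, exhausting the degree) in fact tightens the paper's looser claim that the two polynomials ``share the same roots and the same degree, hence are equal up to a constant.''
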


\bigskip%
\begin{equation}
1-T_{g-1}^{\text{ }2}(x)=\frac{1}{\left(  g-1\right)  ^{2}}(1-x^{2})\left(
\frac{dT_{g-1}\left(  x\right)  }{dx}\right)  ^{2}.\label{equadiff}%
\end{equation}

\begin{proof}
It should hold%
\[
\left\{
\begin{array}
[c]{c}%
T_{g-1}\left(  x_{j}\right)  =\left(  -1\right)  ^{j}\\
\left(  \frac{dT_{g-1}\left(  x\right)  }{dt}\right)  _{x=x_{j}}=0,\text{
}j=1,...,g-2
\end{array}
\right.  .
\]
Observe that

a) the degree of $(1-T_{g-1}^{\text{ }2}(x))$ equals $2\left(  g-1\right)  $
and $1-T_{g-1}^{\text{ }2}(x_{j})=0,$ which implies that the set of roots of
$1-T_{g-1}^{\text{ }2}(x)$ are $\{x_{0},...,x_{g-1}\}$;

b) since $T_{g-1}(x)=a_{0}+a_{1}x+...+a_{g-1}x^{g-1},$ it follows that
$\frac{dT_{g-1}(x)}{dx}=a_{1}+a_{2}x+...+\left(  g-1\right)  a_{g-1}t^{g-2},$
and\
\[
\left(  \frac{dT_{g-1}(x)}{dx}\right)  ^{2}=\beta_{1}+...+\beta_{g-2}%
x^{2\left(  g-2\right)  -2}.
\]
This implies that%

\[
degree(1-x^{2})\left(  \frac{dT_{g-1}(x)}{dx}\right)  ^{2}\leq2\left(
g-1\right)  .\
\]

Furthermore
\[
\ (1-x^{2})\left(  \frac{dT_{g-1}(x)}{dx}\right)  ^{2}=0
\]
when $x\in\{-1,1\}=\{x_{0},x_{g-1}\},$ and
\[
T_{g-1}^{\prime}(x)=0\text{ when }x\in\{x_{1},...,x_{g-2}\}.
\]
From a) and b) we deduce that the polynomials $1-T_{g-1}^{\text{ }2}(x)$ and
$(1-x^{2})\left(  \frac{dT_{g-1}(x)}{dx}\right)  ^{2}$ share the same roots
and the same degree, hence are equal up to a constant $K,$ i.e.%
\begin{equation}
1-T_{g-1}^{\text{ }2}(x)=K(1-t^{2})\left(  \frac{dT_{g-1}(x)}{dx}\right)
^{2}\text{.}\label{(1)}%
\end{equation}
We determine $K$. Let
\[
T_{g-1}(x)=\sum_{i=0}^{g-1}a_{i}x^{i}.
\]
Then the coefficient of $x^{2\left(  g-1\right)  }$ equals $a_{g-1}^{2}$. Also
since
\[
\frac{dT_{g-1}(x)}{dx}=\sum_{i=1}^{g-2}ia_{i}x^{i-1}%
\]
then the term with highest degree in $\left(  \frac{dT_{g-1}(x)}{dx}\right)
^{2}$ is
\[
\left(  g-1\right)  ^{2}a_{g-1}^{2}x^{2\left(  g-1\right)  -2}.
\]
By (\ref{(1)}) it should hold that the coefficient of greatest degree of
$1-T_{g-1}^{\text{ }2}(x)$ equals the corresponding one of $K(1-x^{2})\left(
\frac{dT_{g-1}(x)}{dx}\right)  ^{2}$. \bigskip This means%
\[
-a_{g-1}^{2}x^{2\left(  g-1\right)  }=-Kx^{2}\left(  g-1\right)  ^{2}%
a_{g-1}^{2}x^{2\left(  g-1\right)  -2}%
\]
which yields
\[
1-T_{g-1}^{\text{ }2}(x)=\frac{1}{\left(  g-1\right)  ^{2}}(1-x^{2})\left(
\frac{dT_{g-1}(x)}{dx}\right)  ^{2}\text{.}%
\]
This is a differential equation with separable variables with solution the
$T_{g-1}(x)$ which we look for.
\end{proof}

We first solve the differential equation (\ref{equadiff}).

\begin{lemma}
\bigskip The solution of (\ref{equadiff}) under the boundary conditions
$T_{g-1}(-1)$ $=$ $T_{g-1}(-1)=1$ is%
\[
T_{g-1}(x)=\cos\left(  \left(  g-1\right)  \arccos x\right)
\]
\end{lemma}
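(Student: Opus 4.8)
The plan is to solve the separable differential equation
\[
1-T_{g-1}^{2}(x)=\frac{1}{\left(  g-1\right)  ^{2}}(1-x^{2})\left(
\frac{dT_{g-1}(x)}{dx}\right)  ^{2}
\]
by separating variables and then fitting the boundary data. First I would rewrite the equation by taking square roots on the interval $\left[  -1,1\right]  $, where $1-T_{g-1}^{2}\geq0$ and $1-x^{2}\geq0$, to obtain
\[
\frac{\pm\,dT_{g-1}}{\sqrt{1-T_{g-1}^{2}}}=\left(  g-1\right)  \frac{dx}{\sqrt{1-x^{2}}}.
\]
Integrating both sides gives $\arccos T_{g-1}(x)=\left(  g-1\right)  \arccos x+C$ for some constant $C$ (up to sign, reflecting the $\pm$ ambiguity), so that
\[
T_{g-1}(x)=\cos\!\big(\left(  g-1\right)  \arccos x + C\big).
\]
I would then fix the sign and the constant $C$ using the stated boundary conditions at the endpoints.

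The second step is to pin down $C$. Evaluating at $x=1$ we have $\arccos 1=0$, so $T_{g-1}(1)=\cos C$; the boundary condition $T_{g-1}(1)=1$ forces $\cos C=1$, hence $C=0$ (modulo $2\pi$). With $C=0$ I obtain the candidate
\[
T_{g-1}(x)=\cos\!\big(\left(  g-1\right)  \arccos x\big),
\]
and I would verify consistency with the other endpoint: $\arccos(-1)=\pi$, so $T_{g-1}(-1)=\cos\!\big(\left(  g-1\right)\pi\big)=(-1)^{g-1}$, which matches the required alternating value at the first node (recall $T_{g-1}(x_{j})=(-1)^{j}$ and $-1=x_{0}$ corresponds to $j=0$ after reindexing, so the sign is consistent with the construction). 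One should also check that this candidate is genuinely a polynomial of degree $g-1$, which is precisely the classical fact that $\cos(m\theta)$ expands as a polynomial of degree $m$ in $\cos\theta$; this identifies $T_{g-1}$ as the Chebyshev polynomial of the first kind and closes the loop with the title of the section.

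The main obstacle I anticipate is not the integration itself but the careful justification that the separation of variables and the subsequent choice of a single branch of $\arccos$ yield \emph{the} polynomial solution rather than a spurious or piecewise one. The differential equation is first order but quadratic in the derivative, so a priori its solutions could switch branches at the points where $1-x^{2}$ or $1-T_{g-1}^{2}$ vanishes; I would need to argue that the requirement that $T_{g-1}$ be a polynomial of degree $g-1$ (established in the previous lemma) selects the smooth composite $\cos\big((g-1)\arccos x\big)$ uniquely. A clean way around the branch bookkeeping is to bypass the integration and instead verify directly that the proposed $T_{g-1}(x)=\cos\big((g-1)\arccos x\big)$ satisfies (\ref{equadiff}): substituting $x=\cos\theta$, one computes $\frac{dT_{g-1}}{dx}=\left(  g-1\right)\frac{\sin\big((g-1)\theta\big)}{\sin\theta}$, and then both sides of the equation reduce to $\sin^{2}\big((g-1)\theta\big)$, confirming the identity and the boundary values simultaneously. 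Together with the uniqueness of the polynomial solution of prescribed degree, this establishes the claim.
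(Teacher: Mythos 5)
Your first route --- separating variables, integrating to $\arccos T_{g-1}(x)=(g-1)\arccos x+C$, and fixing $C=0$ from the value at $x=1$ --- is exactly the paper's argument. The difference lies in how the sign ambiguity of the square root is handled: the paper partitions $[-1,1]$ into the subintervals between consecutive extrema of $T_{g-1}$, on each of which $T_{g-1}'$ has constant sign, integrates separately on each piece with the appropriate sign of $|y'|$, and argues the integration constant can be taken to be $0$ throughout; you acknowledge this branch problem and then propose to sidestep it by direct verification (set $x=\cos\theta$, compute $T_{g-1}'=(g-1)\sin((g-1)\theta)/\sin\theta$, and check both sides of (\ref{equadiff}) equal $\sin^{2}((g-1)\theta)$) combined with uniqueness. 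That verification is correct and is arguably cleaner than the paper's piecewise integration, which never fully justifies that the constants agree across the pieces. The one ingredient your alternative leans on that neither you nor the paper proves is the uniqueness of a degree-$(g-1)$ polynomial solution of (\ref{equadiff}) with the prescribed endpoint value; it is true (the equation forces $1-T_{g-1}^{2}$ to vanish at $\pm1$ and doubly at each interior critical point, so $T_{g-1}$ equioscillates and Theorem \ref{ThmBorel-Tchebicheff} pins it down up to sign, the sign being fixed by the endpoint condition), but it deserves an explicit sentence. Finally, your observation that $\cos((g-1)\arccos(-1))=(-1)^{g-1}$, so that the value at $-1$ matches the alternation only after reindexing, is the correct reading of a point on which the lemma's own statement of the boundary conditions (as printed, $T_{g-1}(-1)=T_{g-1}(-1)=1$) is garbled.
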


\begin{proof}
Denote $y=T_{g-1}(\mathsf{v}).$ For $\mathsf{v}=x_{g-1}=1$, it should hold
$y=T_{g-1}(1)=1$. Therefore at $\mathsf{v}=1$, $T_{g-1}$ has a maximum.\ This
proves that for $\mathsf{v}<1$, with $\mathsf{v}$ close to $1$, $y=T_{g-1}%
(\mathsf{v})$ \ is increasing . Let $\mathsf{v}^{\ast}$ the point to the left
of $x_{g-1}$ such that from $\mathsf{v}^{\ast}$ to $x_{g-1}$ the function $y$
is always increasing, and is not increasing before $\mathsf{v}^{\ast}$.
Clearly $\mathsf{v}^{\ast}$ is a minimizer of $T_{g-1\text{ }}$ and
$y(\mathsf{v}^{\ast})=-1$. Therefore on $[\mathsf{v}^{\ast},1]$ the first
derivative $y^{\prime}$ is positive.

We can write, therefore%
\[
\sqrt{(y^{\prime})^{2}}=|y^{\prime}|=y^{\prime}\text{.}%
\]

It follows that the equation $1-y^{2}=\frac{1}{\left(  g-1\right)  ^{2}%
}(1-\mathsf{v}^{2})(y^{\prime})^{2}$ may be written as
\[
\frac{g-1}{\sqrt{1-\mathsf{v}^{2}}}=\frac{y^{\prime}}{\sqrt{1-y^{2}}}\text{.}
\]

Take the primitive on all terms, namely
\[
\left(  g-1\right)  \int\frac{d\mathsf{v}}{\sqrt{1-\mathsf{v}^{2}}}=\int%
\frac{y^{\prime}}{\sqrt{1-y^{2}}}d\mathsf{v}+c\text{. }%
\]

Some attention yields%
\[
\left(  g-1\right)  \arccos\mathsf{v}=\arccos y(\mathsf{v})+c^{\prime}\text{.}%
\]

Hence
\[
\cos(\left(  g-1\right)  \arccos\mathsf{v})=\cos(\arccos y+c^{\prime})\text{.}%
\]

Take $\mathsf{v}=1$; then $y=$ $1$ so that%
\[
\cos(\left(  g-1\right)  \arccos1)=\cos(\arccos1+c^{\prime})\text{.}%
\]

Now $\arccos1=2r\pi$, with $r=0,1,...$. Hence $\cos(h\arccos1)=1$, writing
$rh=r^{\prime}$, with $r^{\prime}\in%
%TCIMACRO{\U{2124} }%
%BeginExpansion
\mathbb{Z}
%EndExpansion
$. It follows that $1=\cos(\arccos1+c^{\prime})$. Write $\arccos1+c^{\prime
}=\beta$; this implies $1=\cos\beta$, i.e. $\beta=2r^{\prime\prime}\pi$, with
$r^{\prime\prime}\in%
%TCIMACRO{\U{2115} }%
%BeginExpansion
\mathbb{N}
%EndExpansion
$.

Note that
\[
\arccos1+c^{\prime}=2h^{\prime}\pi
\]
and therefore
\[
c^{\prime}=2h^{\prime}\pi-\arccos1=2h^{\prime}\pi-2m\pi=2\pi(h^{\prime
}-m)=2h^{\prime\prime}\pi\text{, with }h^{\prime\prime}\in%
%TCIMACRO{\U{2124} }%
%BeginExpansion
\mathbb{Z}
%EndExpansion
\text{.}%
\]

For the constant $c^{\prime}$ we may therefore consider any multiple of $\pi$,
including $0$, i.e. $\cos(\left(  g-1\right)  \arccos\mathsf{v})=\cos(\arccos
y+c^{\prime})$ with $c^{\prime}=0$.

A solution of the initial differential equation is therefore given by
\[
y=\cos(\left(  g-1\right)  \arccos\mathsf{v})\text{, for }\mathsf{v}\in
\lbrack\mathsf{v}^{\ast},1]\text{.}%
\]

The polynomial $T_{g-1}$ increases from $\mathsf{v}^{\ast}$ to $1$, it should
decrease at the left of $\mathsf{v}^{\ast}$ . Define $\mathsf{v}^{\ast\ast}$
the point where it starts its decrease. The point $\mathsf{v}^{\ast\ast}$ is
therefore a maximum point with $y(\mathsf{v}^{\ast\ast})=1$ and $y$ decreases
on $[\mathsf{v}^{\ast\ast},\mathsf{v}^{\ast}].$ Therefore $y^{\prime}<0$ in $[\mathsf{v}^{\ast\ast},\mathsf{v}^{\ast}]$ and
\[
-\frac{y^{\prime}}{\sqrt{1-y^{2}}}=\frac{g-1}{\sqrt{1-\mathsf{v}^{2}}}\text{,}%
\]
since $\sqrt{(y^{\prime})^{2}}=|y^{\prime}|=-y^{\prime}$.

Therefore
\begin{align*}
\int-\frac{y^{\prime}}{\sqrt{1-y^{2}}}dy &  =\int\frac{g-1}{\sqrt
{1-\mathsf{v}^{2}}}d\mathsf{v}+c\text{, }\arccos y\\
&  =\left(  g-1\right)  \arccos\mathsf{v}+c,
\end{align*}
which together with a similar argument as previously yields to adopt $c=0$.

Since $\mathsf{v}^{\ast}$ coincides with $\mathsf{x}_{g-2}$, and
$\mathsf{v}^{\ast\ast}=x_{g-3}$, we iterate the above arguments for all nodes
until $x_{0}$ ; we conclude that $y=\cos(\left(  g-1\right)  \arccos
\mathsf{v})$.\\

Proceeding as above on any of the intervals between the nodes concludes the proof.
\end{proof}

We now obtain the roots $\widetilde{x}_{k}$ of the derivative of
$x\rightarrow T_{g-1}(x).$

\begin{proposition}
It holds
\begin{equation}
\widetilde{x}_{k}=\cos\left(  \frac{k\pi}{g-1}\right)  \text{, for
}k=1,...,g-2\text{.}\label{roots}%
\end{equation}
\end{proposition}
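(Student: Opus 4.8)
The plan is to use the closed form $T_{g-1}(x)=\cos\left((g-1)\arccos x\right)$ obtained in the preceding lemma and to locate the zeros of its derivative in the interior of $[-1,1]$. First I would differentiate via the chain rule: writing $\theta=\arccos x$, so that $d\theta/dx=-1/\sqrt{1-x^{2}}$ on $(-1,1)$, one gets
\[
\frac{dT_{g-1}(x)}{dx}=\frac{(g-1)\sin\left((g-1)\arccos x\right)}{\sqrt{1-x^{2}}}.
\]

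Since $\sqrt{1-x^{2}}\neq0$ for every interior point $x\in(-1,1)$, the derivative vanishes there exactly when $\sin\left((g-1)\arccos x\right)=0$, that is, when $(g-1)\arccos x=k\pi$ for some integer $k$. Solving gives $\arccos x=k\pi/(g-1)$, hence $x=\cos\left(k\pi/(g-1)\right)$.

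Next I would determine the admissible range of $k$. Because $\arccos x\in(0,\pi)$ for $x$ strictly inside $(-1,1)$, the constraint $0<k\pi/(g-1)<\pi$ forces $k\in\{1,\ldots,g-2\}$; the values $k=0$ and $k=g-1$ return the endpoints $x=1$ and $x=-1$, at which the expression for the derivative degenerates (the factor $\sqrt{1-x^{2}}$ in the denominator being the point to watch) and which are not interior critical points. Since the cosine is strictly decreasing on $[0,\pi]$, the $g-2$ numbers $\cos\left(k\pi/(g-1)\right)$, $k=1,\ldots,g-2$, are pairwise distinct and lie in $(-1,1)$, which yields the $\widetilde{x}_{k}$ as claimed.

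Finally, to confirm that these are all of the interior critical points --- so that the count matches the $g-2$ interior nodes identified before the lemma --- I would invoke a degree argument: $T_{g-1}$ has degree $g-1$, so $T_{g-1}^{\prime}$ has degree at most $g-2$ and therefore at most $g-2$ real roots, and the $g-2$ distinct points just found exhaust them. The main (and essentially only) obstacle is the boundary bookkeeping at $x=\pm1$, where the chain-rule expression is singular and the endpoints must be excluded by hand rather than by naively setting the numerator to zero.
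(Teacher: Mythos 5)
Your proposal is correct and follows essentially the same route as the paper: differentiate $T_{g-1}(x)=\cos\left((g-1)\arccos x\right)$ by the chain rule to get $(g-1)\sin\left((g-1)\arccos x\right)/\sqrt{1-x^{2}}$, set the sine factor to zero, and solve $(g-1)\arccos x=k\pi$. Your handling of the endpoint cases $k=0,\,k=g-1$ and the degree count confirming that the $g-2$ interior roots are exhaustive is a bit more careful than the paper's own argument, but the underlying computation is identical.
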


\begin{proof}
We get the roots $\widetilde{x}_{k}$ through a first order differentiation.
With $\theta=\left(  g-1\right)  \arccos\mathsf{v}$ it holds%
\[
y^{\prime}=T_{g-1}^{\prime}(\mathsf{v})=-\sin\theta\frac{d\theta}{d\mathsf{v}%
}=\left(  \sin(h\arccos\mathsf{v})\right)  \frac{g-1}{\sqrt{1-\mathsf{v}^{2}}%
}\text{.}%
\]

Note that
\[
\frac{h}{\sqrt{1-\mathsf{v}^{2}}}\sin(h\arccos\mathsf{v})=0\text{,}%
\sin(h\arccos\mathsf{v})=0
\]
hence
\[
h\arccos\mathsf{v}=k\pi,\arccos\mathsf{v}=\frac{k\pi}{g-1},x_{k}=\cos\left(
\frac{k\pi}{g-1}\right)  .
\]

Then we get
\begin{align*}
y(\widetilde{x}_{k}) &  =\cos(\left(  g-1\right)  \arccos x_{k})\\
&  =\cos\left(  \left(  g-1\right)  \frac{k\pi}{g-1}\right)  =(-1)^{k}\text{,}%
\end{align*}
which yields%
\[
\widetilde{x}_{k}=\cos\left(  \frac{k\pi}{g-1}\right)  \text{, for
}k=1,...,g-2.
\]
We rename the $\widetilde{x}_{k}$'s in increasing order.
\end{proof}

\bigskip

Note that taking $k=0$ and $k=g-1$ in (\ref{roots}) we recover $T_{g-1}%
(-1)=T_{g-1}(1)=1$ so that all $g$ points $\widetilde{x}_{k}$, $0\leq k\leq
g-1$ are points of maximal or minimal value of $T_{g-1}$ in $\left[
-1,1\right]  .$\\

The optimal design is therefore given by
\[
\mathcal{\xi}^{\ast}:=\left\{  \left(  \left[  n\frac{\left\vert l_{k}\left(
x\right)  \right\vert }{\sum_{j=0}^{g-1}\left\vert l_{j}\left(  x\right)
\right\vert }\right]  ;\cos\left(  \frac{k\pi}{g-1}\right)  \right)  \text{,
for }k=0,...,g-1\right\}  .
\]

\begin{definition}
The nodes
\[
\widetilde{x}_{k}=\cos\left(  \frac{k\pi}{g-1}\right)  \text{, for
}k=0,...,g-1.
\]
are the Chebyshev nodes on $\left[  -1,1\right]  .$
\end{definition}

\begin{example}
(Hoel - Levine) \ Consider $%
%TCIMACRO{\U{211d} }%
%BeginExpansion
\mathbb{R}
%EndExpansion
\rightarrow%
%TCIMACRO{\U{211d} }%
%BeginExpansion
\mathbb{R}
%EndExpansion
,x\mapsto P_{3}\left(  x\right)  :=\sum_{j=0}^{3}\theta_{j}x^{j}$, $\theta
_{j}$, $j=0;1;2;3.$ We intend to estimate $P_{3}\left(  2\right)  $ using
$n=52 $ observations in $\left[  -1;1\right]  .$ The model writes as
$y_{i}\left(  x_{j}\right)  :=P_{3}\left(  x\right)  +\varepsilon_{i}$ with
$\varepsilon_{i}\thicksim N\left(  0;1\right)  $ i.i.d, for all $i$ and $j$,
$i=1,...,n_{j} $, $j=0,..,3,$ $\sum_{j=0}^{3}n_{j}=52.$The optimal design
$\overline{\xi}^{\ast}$ is given by:
\[
\left(  x_{j}=\cos\left(  \frac{j\pi}{h}\right)  ,\left[  n\frac{\left\vert
l_{j}\left(  x\right)  \right\vert }{\sum_{j=0}^{g-1}\left\vert l_{j}\left(
x\right)  \right\vert }\right]  \right)  .
\]
Therefore $x_{0}=-1,x_{1}=-\frac{1}{2},x_{2}=\frac{1}{2},x_{3}=1$ and
$\overline{\xi}^{\ast}\left(  -1\right)  =\frac{5}{52},\overline{\xi^{\ast}%
}\left(  -\frac{1}{2}\right)  =\frac{12}{52},\overline{\xi}^{\ast}\left(
\frac{1}{2}\right)  =\frac{20}{52},\overline{\xi}^{\ast}\left(  1\right)
=\frac{15}{52}.$ Instead of this optimal design, consider the design with
nodes \textbf{supp}$\left(  \xi\right)  =\left\{  -1,\frac{-1}{3},\frac{1}%
{3},1\right\}  $ and weights $\xi\left(  -1\right)  =\xi\left(  \frac{-1}%
{3}\right)  =\xi\left(  \frac{1}{3}\right)  =\xi\left(  1\right)  =\frac
{13}{52}$, it holds $var_{\xi}\left(  \mathcal{L}_{3}(\widehat{P_{3}%
})(2)\right)  \thicksim20>var_{\xi^{\ast}}\left(  \mathcal{L}_{3}%
(\widehat{P_{3}})(2)\right)  \thicksim13$ where $var_{\xi}$ and
$var_{\overline{\xi^{\ast}}}$ denote respectively the variance under the
corresponding design.
\end{example}

\subsection{\bigskip Hoel Levine optimal design and the uniform approximation
of functions}

We now make some comment pertaining to the polynomial $x\rightarrow
T_{g-1}(x)$ in relation with the theory of the best uniform polynomial
approximation of functions on $\left[  -1,1\right]  .$

Consider the monomial $x\rightarrow x^{g-1}$ on $\left[  -1,1\right]  $ and
define its best uniform $q_{g-2}$ approximation in the linear space of all
polynomials with degree less or equal $g-2.$ By Theorem
\ref{ThmBorel-Tchebicheff} there exist $g$ equi-oscillation points
$z_{0},..,z_{g-1}$ in $\left[  -1,1\right]  $ where
\[
z_{k}^{g-1}-q_{g-2}(z_{k})=\left(  -1\right)  ^{k}\sup_{x\in\left[
-1,1\right]  }\left\vert x^{g-1}-q_{g-2}(x)\right\vert .
\]
Some analysis proves that the coefficient of $x^{g-1}$ in $T_{g-1}$ is
$C:=1/2^{g-2}.$

Observe that the polynomial $x\rightarrow\widetilde{T}_{g-1}(x):=$
$CT_{g-1}(x)$ shares the same oscillation properties and degree as
$x\rightarrow$ $x^{g-1}-q_{g-2}(x)$ on $\left[  -1,1\right]  ,$but for its
extreme values$.$ Now by Theorem \ref{ThmBorel-Tchebicheff} those properties
identify $T_{g-1}$ with \ the error function $x\rightarrow x^{g-1}-q_{g-2}(x)
$ up to a multiplicative constant $.$

Consider now the null function $x\rightarrow0(x)$ defined on $\left[
-1,1\right]  $ and let $p_{g-1}$ be a generic polynomial defined on $\left[
-1,1\right]  $ with degree $g-1$ and whose coefficient of $x^{g-1}$ is $1.$
Then clearly
\[
\sup_{x\in\left[  -1,1\right]  }\left\vert \widetilde{T}_{g-1}%
(x)-0(x)\right\vert \leq\sup_{x\in\left[  -1,1\right]  }\left\vert
p_{g-1}(x)-0(x)\right\vert .
\]
Note that in the present case when $x\rightarrow0(x)$ is to be approximated,
uniqueness of the best uniform approximating function is defined up to a
multiplicative constant; therefore we may say that $T_{g-1}$ is, up to the
multiplicative constant $C$ the best uniform approximation of $x\rightarrow
0(x)$ by polynomials with degree $g-1$ in $\left[  -1,1\right]  .$

This fact is the entry point to more general optimal designs when estimating
functions outside the context of polynomials; see \cite{Karlin1966a}.

\section{Uniform interpolation optimal designs (Guest)}

Consider the uniform variance of the estimator (with respect to $x$). A
natural strong criterion for optimality is defined through
\begin{equation}
\min_{\left\{  n_{j}\in%
%TCIMACRO{\U{2115} }%
%BeginExpansion
\mathbb{N}
%EndExpansion
^{\ast},\text{ }j=0,...,g-1:\sum_{j=0}^{g-1}n_{j}=n\right\}  }\max
_{x\in\left[  -1;1\right]  }var\left(  \mathcal{L}_{n}(\widehat{f})(x)\right)
.\label{CritereGuest}%
\end{equation}

In this section two goals will be reached. First we obtain the optimal design
$\xi^{\ast}$ solving (\ref{CritereGuest}). Then we will show that
extrapolation designs are of a different nature with respect to interpolation
ones, since, as seen below,%
\[
var_{\mathcal{\xi}^{\ast}}\left(  \mathcal{L}_{n}(\widehat{f})(x)\right)
\neq\min_{\xi}var\left(  \mathcal{L}_{n}(\widehat{f})(x)\right)  \text{, for
}x>1\text{ }%
\]
where the minimum upon $\xi$ all designs depends on $x.$ Here we consider an
extrapolation design with $x>1.$\\

Define the Legendre polynomials on $\left[  -1,1\right]  .$

\begin{definition}
The Legendre polynomial of order $g-1$ on $\left[  -1,1\right]  $ is defined
by
\begin{align*}
P_{g-1}\left(  x\right)   &  :=\frac{1}{2^{g-1}\left(  g-1\right)  !}%
\frac{d^{g-1}}{dx^{g-1}}\left(  x^{2}-1\right)  ^{g-1}\\
&  =2^{-\left(  g-1\right)  }\sum_{j=0}^{\left[  \frac{g-1}{2}\right]
}\left(  -1\right)  ^{g-1}\left(
\begin{tabular}
[c]{l}%
$g-1$\\
$j$%
\end{tabular}
\ \right)  \left(
\begin{tabular}
[c]{l}%
$2\left(  g-1-j\right)  $\\
$g-1$%
\end{tabular}
\ \right)  x^{g-1-2j}.
\end{align*}

\end{definition}

\bigskip

\begin{remark}
The relation $P_{g-1}\left(  x\right)  :=\frac{1}{2^{g-1}\left(  g-1\right)
!}\frac{d^{g-1}}{dx^{g-1}}\left(  \left(  x^{2}-1\right)  ^{g-1}\right)  $ is
known as Rodriguez formula.
\end{remark}

\begin{remark}
Clearly $P_{g-1}$ has $g-1$ roots in $\left(  -1,1\right)  ,$ as seen now.
Indeed the polynomial $\left(  x^{2}-1\right)  ^{g-1}$ has degree $2\left(
g-1\right)  $, and it has multiple roots at points $\pm1$. By Rolle's Theorem
its derivative admits a root inside $\left(  -1,1\right)  .$ This derivative
assumes also the value $0$ at $\pm1$, since it has at least three roots in
$\left[  -1,1\right]  $. Apply once more Rolle's theorem to the second
derivative, which takes value $0$ at $\pm1$, since it has at least four roots.
Proceeding further , the $\left(  g-1\right)  -$th derivative has $g-1$ roots
in $\left(  -1,1\right)  .$ Up to a constant this derivative is the Legendre
polynomial $P_{g-1}.$
\end{remark}

\begin{remark}
\label{remarkPoints extremaux-1+1}The value of $P_{g-1}\left(  x\right)  $ at
$x=\pm1$ can be obtained. Indeed it holds
\[
\left(  x^{2}-1\right)  ^{g-1}=\left(  x-1\right)  ^{g-1}\left(  x+1\right)
^{g-1}.
\]
By Leibnitz formula\bigskip\
\begin{align*}
&  \frac{d^{g-1}\left(  \left(  x-1\right)  ^{g-1}\left(  x+1\right)
^{g-1}\right)  }{dx^{g-1}}\\
&  =\sum_{j=0}^{g-1}\left(
\begin{tabular}
[c]{l}%
$g-1$\\
$j$%
\end{tabular}
\ \right)  \frac{d^{j}\left(  \left(  x-1\right)  ^{g-1}\right)  }{dx^{j}%
}\frac{d^{g-1-j}\left(  x+1\right)  ^{g-1}}{dx^{g-1-j}}.
\end{align*}
For $j=0,...,g-2,$ it holds
\[
\left(  \frac{d^{j}\left(  \left(  x-1\right)  ^{g-1}\right)  }{dx^{j}%
}\right)  _{x=1}=0
\]
and
\[
\left(  \frac{d^{g-1}\left(  x-1\right)  ^{g-1}}{dx^{g-1}}\right)
_{x=1}=\left(  g-1\right)  !.
\]
Henceforth
\[
\frac{d^{g-1}\left(  \left(  x-1\right)  ^{g-1}\left(  x+1\right)
^{g-1}\right)  }{dx^{g-1}}=\left(  g-1\right)  !2^{g-1}.
\]
This yields
\begin{equation}
P_{g-1}\left(  1\right)  =1\text{ and }P_{g-1}\left(  -1\right)  =\left(
-1\right)  ^{g-1}.\label{(2)}%
\end{equation}

\end{remark}

\bigskip

\bigskip We need some facts about the Lagrange elementary polynomials;
denoting
\[
\pi\left(  x\right)  :=%
%TCIMACRO{\tprod \limits_{j=0}^{g-1}}%
%BeginExpansion
{\textstyle\prod\limits_{j=0}^{g-1}}
%EndExpansion
\left(  x-x_{j}\right)
\]
it holds

\bigskip

\begin{lemma}
\label{LemmaGuest1}It holds (i)%
\begin{align*}
\left(  \frac{d\pi\left(  x\right)  }{dx}\right)  _{x=x_{j}} &  =0\text{ for
}j=1,...,g-2,\text{ \ }\\
\text{ iff }\left(  \frac{d^{2}\pi\left(  x\right)  }{dx^{2}}\right)
_{x=x_{j}} &  =0\text{ for }j=1,...,g-2.
\end{align*}
(ii) $\pi\left(  x\right)  =\alpha\left(  x^{2}-1\right)  \phi_{g-2}\left(
x\right)  ,$with%
\[
\phi_{g-2}\left(  x\right)  =\frac{dP_{g-1}\left(  x\right)  }{dx}%
\]
\bigskip where $P_{g-1}$ is the Legendre polynomial of order $g-1$ on $\left[
-1,1\right]  .$

Finally (iii)
\[
\text{ }\left(  \frac{d}{dx}_{j}l_{j}\left(  x\right)  \right)  _{x=x_{j}%
}=0\text{ iff }\left(  \frac{dP_{g-1}\left(  x\right)  }{dx}\right)
_{x=x_{j}}=0.
\]

\end{lemma}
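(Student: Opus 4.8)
The plan is to reduce all three assertions to two ingredients: a handful of algebraic identities relating the derivatives of $\pi$ and of the $l_{j}$ at the nodes, and the second-order differential equation satisfied by the Legendre polynomial. First I would record the local identities at an interior node. Writing $\pi(x)=(x-x_{j})g_{j}(x)$ with $g_{j}(x):=\prod_{k\neq j}(x-x_{k})$, one reads off $g_{j}(x_{j})=\pi'(x_{j})$ and, differentiating $\pi=(x-x_{j})g_{j}$ twice, $\pi''(x_{j})=2g_{j}'(x_{j})$. Since $l_{j}=g_{j}/g_{j}(x_{j})$ by the standard formula, this yields the single governing identity
\[
\left(\frac{dl_{j}(x)}{dx}\right)_{x=x_{j}}=\frac{g_{j}'(x_{j})}{g_{j}(x_{j})}=\frac{\pi''(x_{j})}{2\pi'(x_{j})}.
\]
Because the nodes are distinct, each is a simple root of $\pi$, so $\pi'(x_{j})\neq0$. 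Hence the critical-point condition $l_{j}'(x_{j})=0$ holds exactly when $\pi''(x_{j})=0$: this is the operative equivalence behind (i), and it reduces (iii) to showing that $\pi''(x_{j})=0$ at every interior node if and only if that node is a zero of $P_{g-1}'$.

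The heart of the argument is (ii). Using the standing fact that the extreme nodes are $x_{0}=-1$ and $x_{g-1}=1$, I would write $\pi(x)=(x^{2}-1)\,\omega(x)$ with $\omega$ monic of degree $g-2$ whose roots are the interior nodes, and impose the second-order condition $\pi''(x_{j})=0$ there. Expanding $\pi''=2\omega+4x\omega'+(x^{2}-1)\omega''$ and evaluating at a root of $\omega$ shows that the polynomial $(1-x^{2})\omega''-4x\omega'$ vanishes at all $g-2$ interior nodes. A degree and leading-coefficient count (it has degree exactly $g-2$ with leading coefficient $-(g-2)(g+1)$, while $\omega$ is monic of the same degree) forces
\[
(1-x^{2})\omega''-4x\omega'+(g-2)(g+1)\,\omega=0.
\]
This is precisely the equation obtained by differentiating the Legendre equation
\[
(1-x^{2})P_{g-1}''-2xP_{g-1}'+(g-1)g\,P_{g-1}=0
\]
once, the eigenvalues matching because $(g-2)(g+1)=(g-1)g-2$. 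Thus $\phi_{g-2}=P_{g-1}'$ is a solution; and since the two-term recurrence $a_{k+2}=\frac{k^{2}+3k-(g-2)(g+1)}{(k+1)(k+2)}a_{k}$ on the Taylor coefficients terminates precisely at degree $g-2$ and kills the opposite-parity chain, the polynomial solution is unique up to a scalar. Therefore $\omega=\alpha^{-1}\phi_{g-2}$, that is $\pi=\alpha(x^{2}-1)\,dP_{g-1}/dx$, which is (ii).

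Finally, (iii) follows by combining the two steps. By the Remark preceding the lemma, $P_{g-1}'$ has exactly its $g-2$ zeros in $(-1,1)$, and by (ii) these coincide with the interior nodes; so at an interior node $\pi''(x_{j})=0$ is equivalent to $P_{g-1}'(x_{j})=0$, and together with the displayed identity this gives $l_{j}'(x_{j})=0$ if and only if $P_{g-1}'(x_{j})=0$.

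The hard part will be the identification in (ii). Two points require care: first, checking that the \emph{scalar} conditions $\pi''(x_{j})=0$ at the $g-2$ interior nodes are already strong enough to force the full second-order ODE for $\omega$, which is where the degree/leading-coefficient bookkeeping must pin the eigenvalue to $(g-2)(g+1)$; and second, justifying that the degree-$(g-2)$ polynomial solution of that ODE is unique up to scaling, so that $\omega$ is genuinely $P_{g-1}'$ and not merely a cosolution. The recurrence argument guaranteeing both termination at degree $g-2$ and vanishing of the opposite-parity coefficients is the delicate step; everything else is routine differentiation.
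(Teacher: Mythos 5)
Your proposal is correct, and it is worth recording how it relates to the paper's own argument. For part (i) you and the paper do essentially the same computation: writing $l_{j}(x)=\pi(x)/\bigl((x-x_{j})\pi'(x_{j})\bigr)$ and differentiating twice gives $\pi''(x_{j})=2\pi'(x_{j})\,l_{j}'(x_{j})$, so that (since the nodes are simple roots, $\pi'(x_{j})\neq0$) the vanishing of $l_{j}'(x_{j})$ is equivalent to that of $\pi''(x_{j})$ --- which is also the reading that makes the paper's somewhat garbled statement of (i) meaningful. For part (ii) the setup is identical (factor $\pi(x)=\alpha(x^{2}-1)\phi_{g-2}(x)$ and impose $2\phi+4x\phi'+(x^{2}-1)\phi''=0$ at the interior nodes), but at the decisive step the paper simply asserts that $P_{g-1}'$ ``is precisely the solution of this system'' with a citation to Guest (1958), whereas you actually prove it: the degree and leading-coefficient count showing that $(1-x^{2})\omega''-4x\omega'$, having degree $g-2$, leading coefficient $-(g-2)(g+1)$, and vanishing at the $g-2$ distinct interior nodes, must equal $-(g-2)(g+1)\omega$; the observation that this ODE is the once-differentiated Legendre equation with matching eigenvalue $(g-2)(g+1)=g(g-1)-2$; and the recurrence $a_{k+2}=\frac{k^{2}+3k-(g-2)(g+1)}{(k+1)(k+2)}a_{k}$, whose numerator has roots $k=g-2$ and $k=-(g+1)$, forcing termination of one parity chain at degree $g-2$ and non-termination (hence vanishing) of the other, which gives uniqueness of the polynomial solution up to scale. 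This supplies exactly the content the paper outsources. For (iii) you route through the identity $l_{j}'(x_{j})=\pi''(x_{j})/(2\pi'(x_{j}))$ rather than differentiating the quotient $\pi(x)/\bigl((x-x_{j})K\bigr)$ directly as the paper does (the paper's version has an unaddressed $0/0$ at $x=x_{j}$); your route is cleaner. The one point to keep explicit is that the ``vanishes at the $g-2$ interior nodes implies proportional to $\omega$'' step uses the distinctness of the interior nodes, which holds by hypothesis.
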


\begin{proof}
Denoting%
\[
\pi\left(  x\right)  :=%
%TCIMACRO{\tprod \limits_{j=0}^{g-1}}%
%BeginExpansion
{\textstyle\prod\limits_{j=0}^{g-1}}
%EndExpansion
\left(  x-x_{j}\right)
\]
write%
\[
l_{j}\left(  x\right)  =\frac{\pi\left(  x\right)  }{\left(  x-x_{j}\right)
\left(  \frac{d\pi\left(  x\right)  }{dx}\right)  _{x_{j}}}.
\]
We have%
\[
\pi\left(  x\right)  =\left(  x-x_{j}\right)  \left(  \frac{d\pi\left(
x\right)  }{dx}\right)  _{x_{j}}l_{j}\left(  x\right)  ,
\]%
\[
\frac{d\pi\left(  x\right)  }{dx}=\left(  x-x_{j}\right)  \left(  \frac
{d\pi\left(  x\right)  }{dx}\right)  _{x_{j}}\frac{dl_{j}\left(  x\right)
}{dx}+\left(  \frac{d\pi\left(  x\right)  }{dx}\right)  _{x_{j}}l_{j}\left(
x\right)
\]
and%
\[
\frac{d^{2}\pi\left(  x\right)  }{dx^{2}}=\left(  \frac{d\pi\left(  x\right)
}{dx}\right)  _{x_{j}}\left\{  \left(  x-x_{j}\right)  \frac{d^{2}l_{j}\left(
x\right)  }{dx^{2}}+2\frac{dl_{j}\left(  x\right)  }{dx}\right\}  .
\]
This last display proves (i).

In%
\[
\pi\left(  x\right)  :=%
%TCIMACRO{\tprod \limits_{j=0}^{g-1}}%
%BeginExpansion
{\textstyle\prod\limits_{j=0}^{g-1}}
%EndExpansion
\left(  x-x_{j}\right)
\]
the $x_{j}$'s, $j=0,...,g-1$, are the abscissas where the variance function is
minimax. Indeed in (\ref{(3)}) we have proved that the absolute value of the
elementary Lagrange polynomial takes value $1$ , which is its maximal value,
when evaluated on the nodes. Hence the variance
\[
var\left(  \mathcal{L}_{n}(\widehat{f})(x)\right)  =\sum_{i=0}^{g-1}\left(
l_{j}\left(  x\right)  \right)  ^{2}\frac{\sigma^{2}}{n_{i}}%
\]
takes its maximal values at points $x_{j}$'s.

Hence $\left\{  -1,1\right\}  \subset\left\{  x_{j},j=0,...,g-1\right\}  $ and
the remaining $g-2$ $x_{j}$'s are points of maximal value of the variance
inside $\left(  -1;1\right)  .$ Write the polynomial $\pi\left(  x\right)  $
as%
\[
\pi\left(  x\right)  =\alpha\left(  x^{2}-1\right)  \phi_{g-2}\left(
x\right)  ,
\]
where%
\[
\alpha\phi_{g-2}\left(  x\right)  :=%
%TCIMACRO{\tprod \limits_{j=0,\text{ }j\neq-1;1}^{g-1}}%
%BeginExpansion
{\textstyle\prod\limits_{j=0,\text{ }j\neq-1;1}^{g-1}}
%EndExpansion
\left(  x-x_{j}\right)  .
\]
The polynomial $\phi_{g-2}$, with degree $g-2$ is determined through the
conditions%
\[
\left(  \frac{d^{2}\pi\left(  x\right)  }{dx^{2}}\right)  _{x=x_{j}}=0\text{
for }j=1,...,g-2.
\]
Since
\[
\frac{d\pi\left(  x\right)  }{dx}=2\alpha x\phi_{g-2}\left(  x\right)
+\alpha\left(  x^{2}-1\right)  \frac{d\left(  \phi_{g-2}\left(  x\right)
\right)  }{dx}%
\]
and%
\[
\frac{d^{2}\pi\left(  x\right)  }{dx^{2}}=2\alpha\phi_{g-2}\left(  x\right)
+4\alpha x\frac{d\left(  \phi_{g-2}\left(  x\right)  \right)  }{dx}%
+\alpha\left(  x^{2}-1\right)  \frac{d^{2}\left(  \phi_{g-2}\left(  x\right)
\right)  }{dx^{2}}%
\]
those conditions amount to the system%
\[
\left\{
\begin{tabular}
[c]{l}%
$0=2\alpha\phi_{g-2}\left(  x_{1}\right)  +4\alpha x_{1}\left(  \frac{d\left(
\phi_{g-2}\left(  x\right)  \right)  }{dx}\right)  _{x=x_{1}}+\alpha\left(
x_{1}^{2}-1\right)  \left(  \frac{d^{2}\left(  \phi_{g-2}\left(  x\right)
\right)  }{dx^{2}}\right)  _{x=x_{1}}$\\
$........................................................................$\\
$0=2\alpha\phi_{g-2}\left(  x_{j}\right)  +4\alpha x_{j}\left(  \frac{d\left(
\phi_{g-2}\left(  x\right)  \right)  }{dx}\right)  _{x=x_{j}}+\alpha\left(
x_{j}^{2}-1\right)  \left(  \frac{d^{2}\left(  \phi_{g-2}\left(  x\right)
\right)  }{dx^{2}}\right)  _{x=x_{j}}$\\
$........................................................................$\\
$0=2\alpha\phi_{g-2}\left(  x_{g-2}\right)  +4\alpha x_{g-2}\left(
\frac{d\left(  \phi_{g-2}\left(  x\right)  \right)  }{dx}\right)  _{x=x_{g-2}%
}+\alpha\left(  x_{g-2}^{2}-1\right)  \left(  \frac{d^{2}\left(  \phi
_{g-2}\left(  x\right)  \right)  }{dx^{2}}\right)  _{x_{g-2}}$%
\end{tabular}
\ \ \right.  .
\]
Now the derivative of the Legendre $\ $polynomial $P_{g-1}$ is precisely the
solution of this system (see \cite{Guest1958}). Hence%

\[
\phi_{g-2}\left(  x\right)  =\frac{dP_{g-1}\left(  x\right)  }{dx}.
\]
This closes the proof of (ii).

We prove (iii). It holds%
\[
l_{j}\left(  x\right)  =\frac{\pi\left(  x\right)  }{\left(  x-x_{j}\right)
\left(  \frac{d\pi\left(  x\right)  }{dx}\right)  _{x_{j}}}.
\]
By
\[
\frac{d}{dx}l_{j}(x)=\frac{\alpha(x^{2}-1)\frac{d^{2}}{dx^{2}}P_{g-1}%
(x)+2\alpha x\frac{d}{dx}P_{g-1}(x)}{\left(  x-\widetilde{x_{j}}\right)
K}-\frac{\alpha(x^{2}-1)\frac{d}{dx}P_{g-1}(x)}{\left(  x-\widetilde{x_{j}%
}\right)  ^{2}K}%
\]
for some constant $K.$ When $x=\widetilde{x_{j}}$ then (iii) follows.
\end{proof}

We now obtain the optimal design.\ It holds

\begin{proposition}
\label{PropGuest}The nodes of the optimal design $\xi^{\ast}$ are the $g-2$
solutions of the equation%
\[
\frac{d}{dx}P_{g-1}\left(  x\right)  =0
\]
and $-1,1.$ The optimal frequencies are defined by the relation
\[
n_{j}=\frac{g}{n}.
\]

\end{proposition}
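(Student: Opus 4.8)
The plan is to pin down the optimal design by sandwiching the minimax variance between a universal lower bound and the value attained by the proposed configuration. Throughout write $V(x):=\mathrm{var}\left(\mathcal{L}_{n}(\widehat{f})(x)\right)=\sigma^{2}\sum_{i=0}^{g-1}l_{i}(x)^{2}/n_{i}$ and recall the interpolation identity $l_{i}(x_{j})=\delta_{ij}$, so that $V(x_{j})=\sigma^{2}/n_{j}$ at every node. First I would record a lower bound valid for \emph{every} design, nodes and frequencies alike: since the maximum of $V$ on $[-1,1]$ dominates its values at the nodes,
\[
\max_{x\in[-1,1]}V(x)\;\ge\;\max_{0\le j\le g-1}V(x_{j})\;=\;\sigma^{2}\max_{j}\frac{1}{n_{j}}\;=\;\frac{\sigma^{2}}{\min_{j}n_{j}}\;\ge\;\frac{\sigma^{2}g}{n},
\]
the last step because $\min_{j}n_{j}\le\frac{1}{g}\sum_{j}n_{j}=n/g$. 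Equality throughout forces $n_{j}=n/g$ for all $j$, which already identifies the optimal frequencies and shows that any non-uniform allocation is strictly worse.

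Next I would verify that the design with $n_{j}=n/g$ and nodes $\{-1,1\}\cup\{x:P'_{g-1}(x)=0\}$ attains this bound, which is the substantive part. With equal frequencies $V(x)=\frac{\sigma^{2}g}{n}W(x)$ where $W(x):=\sum_{i=0}^{g-1}l_{i}(x)^{2}$, so it suffices to prove $W(x)\le 1$ on $[-1,1]$, with equality at the nodes. I would study $1-W$, a polynomial of degree $2(g-1)$. It vanishes at all $g$ nodes; at each interior node $x_{j}$, $1\le j\le g-2$, the vanishing is of order two because $(1-W)'(x_{j})=-W'(x_{j})=-2\,l_{j}'(x_{j})=0$ by Lemma \ref{LemmaGuest1}(iii) (these interior nodes being exactly the roots of $P'_{g-1}$), whereas at $\pm1$ the roots are simple, since $P'_{g-1}(\pm1)\neq0$. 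This accounts for $2(g-2)+2=2(g-1)$ roots, the full degree, whence
\[
1-W(x)=c\,(x^{2}-1)\left(P'_{g-1}(x)\right)^{2}
\]
for some constant $c$. Comparing leading coefficients — that of $W$ being $\sum_{i}\pi'(x_{i})^{-2}>0$ — gives $c<0$; since $(x^{2}-1)\le0$ and $\left(P'_{g-1}\right)^{2}\ge0$ on $[-1,1]$, we obtain $1-W\ge0$ there. Hence $\max_{x}V(x)=\sigma^{2}g/n$, matching the lower bound exactly.

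Combining the two steps, the proposed design is optimal: the optimal frequencies are the uniform ones $n_{j}=n/g$ (equivalently $\xi^{\ast}(x_{j})=1/g$), and the optimal nodes are $\pm1$ together with the $g-2$ roots of $P'_{g-1}$, consistently with the factorization $\pi(x)=\alpha(x^{2}-1)P'_{g-1}(x)$ of Lemma \ref{LemmaGuest1}(ii). The main obstacle is precisely the inequality $W\le1$: everything turns on the double-root phenomenon at the interior nodes, which is supplied by Lemma \ref{LemmaGuest1}(iii), and on the sign of the leading coefficient of $W$. I would also flag that the displayed frequency ``$n_{j}=g/n$'' in the statement should read $n_{j}=n/g$, and note that uniqueness of the optimal nodes follows from the same argument, since any configuration achieving the value $\sigma^{2}g/n$ must make $1-W$ nonnegative on $[-1,1]$ with double zeros at its interior roots, forcing the Gauss--Lobatto--Legendre structure.
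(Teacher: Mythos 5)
Your proof is correct, but it takes a genuinely different route from the paper's. The paper proceeds by \emph{derivation}: it first obtains the uniform frequencies by minimizing the surrogate $\sum_{j}1/n_{j}$ under $\sum_{j}n_{j}=n$, then invokes the principle that the largest variance ``should be attained on the points of measurement'', which yields the stationarity conditions $\bigl(\tfrac{d}{dx}l_{j}(x)\bigr)_{x=\widetilde{x}_{j}}=0$ at the $g-2$ interior nodes, and finally identifies these points with the roots of $P_{g-1}'$ through Lemma \ref{LemmaGuest1}(iii); the minimax value $g\sigma^{2}/n$ and the closed form of $var_{\xi^{\ast}}$ are obtained only afterwards via Lobatto's formula. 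You instead give a genuine sandwich argument: a universal lower bound $\max_{[-1,1]}V\ge \sigma^{2}/\min_{j}n_{j}\ge \sigma^{2}g/n$, valid for \emph{every} choice of nodes and frequencies (evaluate $V$ at the nodes, where $l_{i}(x_{j})=\delta_{ij}$ gives $V(x_{j})=\sigma^{2}/n_{j}$, and pigeonhole), followed by the verification that the Gauss--Lobatto configuration with equal weights attains it, via the identity $1-\sum_{i}l_{i}^{2}=c\,(x^{2}-1)\bigl(P_{g-1}'\bigr)^{2}$ with $c<0$, obtained by counting zeros with multiplicity --- the double zeros at the interior nodes being exactly where Lemma \ref{LemmaGuest1}(iii) enters. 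This buys you something the paper does not quite deliver: an actual proof of minimax optimality rather than a plausible construction, together with the explicit variance formula as a by-product and without appeal to Lobatto quadrature. Two small caveats. First, your closing uniqueness remark needs the converse direction, namely that the conditions $l_{j}'(x_{j})=0$ at the interior nodes force those nodes to be the roots of $P_{g-1}'$; this is what part (ii) of Lemma \ref{LemmaGuest1} supplies, so it deserves an explicit citation rather than a flag. Second, you are right that the displayed frequency $n_{j}=g/n$ in the statement is a typo for $n_{j}=n/g$ (equivalently $\xi^{\ast}(x_{j})=1/g$), consistent with the value $\sigma^{2}/n_{j}=g\sigma^{2}/n$ of the variance at the nodes.
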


\begin{proof}
Keeping the notation $l_{j}$ defined in (\ref{PolElemLagrange}), we have
\[
var\left(  \mathcal{L}_{n}(\widehat{f})(x)\right)  =\sum_{j=0}^{g-1}l_{j}%
^{2}\left(  x\right)  \frac{\sigma^{2}}{n_{j}}.
\]
Since $\frac{\sigma^{2}}{n_{j}}>0$, any $\frac{\sigma^{2}}{n_{j}}$ should be
minimal in order to make the sum minimal. Hence $\left(  n_{0}^{\ast
},...,n_{g-1}^{\ast}\right)  $ should solve
\begin{equation}
\left\{
\begin{tabular}
[c]{l}%
$min_{\left(  n_{0},...,n_{g-1}\right)  }\left(  \frac{1}{n_{0}}+...+\frac
{1}{n_{g-1}}\right)  $\\
$\sum_{j=0}^{g-1}n_{j}=n$%
\end{tabular}
\ \ \ \ \right.  .\label{PbGuest1}%
\end{equation}
Hence%
\[
n_{j}^{\ast}=\frac{g}{n}.
\]
The polynomial $\sum_{j=0}^{g-1}l_{x_{j}}^{2}\left(  x\right)  \frac
{\sigma^{2}}{n_{j}}$ has degree $2g-2$, and indeed has $g-1$ roots of order 2.
This\ function is a decreasing function of $x$ on $\left(  -\infty,-1\right)
$ and an increasing function of $x$ on $\left(  1,+\infty\right)  ;$ the
points $-1$ and $1$ are therefore points of local maximal value of the
variance. The variance has therefore $g$ local extrema in $\left[
-1,1\right]  .$ Hence there exist $g-2$ local extrema for the variance inside
$\left(  -1,1\right)  ;$ they lie between the roots of $var\left(
\mathcal{L}_{n}(\widehat{f})(x)\right)  .$ These extrema are maxima, since the
variance is a sum of squares and takes value $0$ $g-2$ times.

We suppose that we have those points at hand; call them $\widetilde{x}_{j}$,
$j=0,...,g-1.$On those points $\widetilde{x}_{j}$ the function $var\left(
\mathcal{L}_{n}(\widehat{f})(x)\right)  $ takes the value
\[
\sum_{i=0}^{g-1}l_{i}^{2}\left(  \widetilde{x}_{j}\right)  \frac{\sigma^{2}%
}{n_{i}}=\frac{\sigma^{2}}{n_{j}}%
\]
with
\[
l_{i}\left(  x\right)  :=%
%TCIMACRO{\tprod \limits_{j=0,j\neq i}^{g-1}}%
%BeginExpansion
{\textstyle\prod\limits_{j=0,j\neq i}^{g-1}}
%EndExpansion
\frac{x-\widetilde{x}_{j}}{\widetilde{x}_{i}-\widetilde{x}_{j}}.
\]

The function $x\rightarrow$ $l_{j}^{2}\left(  x\right)  $ takes its maximal
value for $x=\widetilde{x}_{j}$, with $l_{j}^{2}\left(  \widetilde{x}%
_{j}\right)  =1$ independently on $j.$ Therefore it holds%
\[
\max_{x\in\left[  -1;1\right]  }var\left(  \mathcal{L}_{n}(\widehat{f}%
)(x)\right)  =\max_{x\in\left[  -1;1\right]  }\sum_{j=0}^{g-1}l_{j}^{2}\left(
x\right)  \frac{\sigma^{2}}{n_{j}}.
\]
The principle leading to the optimal design should now be made precise.\ The
largest variance of $\left(  \mathcal{L}_{n}(\widehat{f})(x)\right)  $ should
be attained on the points of measurements, in order to be able to control
it.\ Consider two nodes $\widetilde{x}_{i}$ and $\widetilde{x}_{k}.$ Then
\[
var\left(  \mathcal{L}_{n}(\widehat{f})(\widetilde{x}_{i})\right)  =var\left(
\mathcal{L}_{n}(\widehat{f})(\widetilde{x}_{k})\right)  =\frac{\sigma^{2}%
g^{2}}{n}.
\]
Hence
\[
\max_{x\in\left[  -1;1\right]  }var\left(  \mathcal{L}_{n}(\widehat{f}%
)(x)\right)  =\frac{\sigma^{2}g^{2}}{n}.
\]
The nodes should hence be the points of maximal value of the variance, which
equals $\frac{\sigma^{2}g^{2}}{n}.$

The first derivative of $var\left(  \mathcal{L}_{n}(\widehat{f})(x)\right)  $
writes%
\[
\frac{d}{dx}var\left(  \mathcal{L}_{n}(\widehat{f})(x)\right)  =\frac{2g}%
{n}\sum_{j=0}^{g-1}l_{j}\left(  x\right)  \frac{d}{dx}l_{j}\left(  x\right)  .
\]

It follows that finding the $g-2$ internal nodes $\widetilde{x}_{j}$'s results
in finding the solutions of the equation
\begin{equation}
\frac{d}{dx}var\left(  \mathcal{L}_{n}(\widehat{f})(x)\right)
_{x=\widetilde{x}_{j}}=0\label{(4)}%
\end{equation}
which by the above argument turns out to solve
\[
\left(  \sum_{j=0}^{g-1}l_{j}\left(  x\right)  \frac{d}{dx}l_{j}\left(
x\right)  \right)  _{x=\widetilde{x}_{j}}=0
\]
which yields, since
\[
l_{i}\left(  \widetilde{x}_{j}\right)  =\delta_{i,j}%
\]%
\[
\left(  \frac{d}{dx}l_{j}\left(  x\right)  \right)  _{x=\widetilde{x}_{j}%
}=0\text{ for all }j=1,..,g-2.
\]
This is a system of $g-2$ \ equations in the $\ g-2$ variables $\widetilde{x}%
_{1},..,\widetilde{x}_{g-2}.$ This system has precisely $g-2$ solutions,
solving (\ref{(4)}).

Apply Lemma \ref{LemmaGuest1} (iii) to conclude.
\end{proof}

\bigskip

We now characterize the performance of the optimal design $\xi^{\ast}$ through
an evaluation of the minimax variance (\ref{CritereGuest})
\[
\min_{\left\{  n_{j}\in%
%TCIMACRO{\U{2115} }%
%BeginExpansion
\mathbb{N}
%EndExpansion
^{\ast},\text{ }j=0,...,g-1:\sum_{j=0}^{g-1}n_{j}=n\right\}  }\max
_{x\in\left[  -1;1\right]  }var\left(  \mathcal{L}_{n}(\widehat{f})(x)\right)
.
\]

\begin{lemma}
\label{LemmGuest2}\bigskip\ The Legendre polynomial $P_{g-1}$ is a solution of
the following differential equation (so-called Legendre equation)
\end{lemma}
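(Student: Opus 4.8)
The plan is to obtain the Legendre equation directly from the Rodrigues formula of the Definition, namely $P_{g-1}(x)=\frac{1}{2^{g-1}(g-1)!}\frac{d^{g-1}}{dx^{g-1}}(x^{2}-1)^{g-1}$, by repeatedly differentiating a single first-order identity. Writing $u(x):=(x^{2}-1)^{g-1}$, the first step is to record the elementary consequence of one differentiation, $u'(x)=2(g-1)x(x^{2}-1)^{g-2}$, which I would immediately rewrite free of fractional factors as
\[
(x^{2}-1)\,u'(x)=2(g-1)\,x\,u(x).
\]
This polynomial identity, valid for all $x$, is the algebraic seed of the whole argument.

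Next I would differentiate this identity exactly $g$ times and apply the Leibniz rule for the derivative of a product. The advantage of having started from the quadratic $x^{2}-1$ and the linear factor $x$ is that almost every term in the Leibniz expansion is annihilated: only derivatives of $x^{2}-1$ up to order two and of $x$ up to order one survive. On the left-hand side this leaves the three terms $(x^{2}-1)u^{(g+1)}$, $2gx\,u^{(g)}$ and $g(g-1)u^{(g-1)}$ (carrying the coefficients $\binom{g}{0},\binom{g}{1},\binom{g}{2}$), while the right-hand side contributes $2(g-1)x\,u^{(g)}$ and $2g(g-1)u^{(g-1)}$. Collecting the coefficients of $u^{(g)}$ and of $u^{(g-1)}$ then yields
\[
(x^{2}-1)\,u^{(g+1)}+2x\,u^{(g)}-g(g-1)\,u^{(g-1)}=0.
\]

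Finally I would translate this back into a statement about $P_{g-1}$. By the Rodrigues formula $u^{(g-1)}=2^{g-1}(g-1)!\,P_{g-1}$, and hence $u^{(g)}=2^{g-1}(g-1)!\,P_{g-1}'$ and $u^{(g+1)}=2^{g-1}(g-1)!\,P_{g-1}''$; substituting these and cancelling the common factor $2^{g-1}(g-1)!$ converts the displayed identity into
\[
(1-x^{2})\,P_{g-1}''(x)-2x\,P_{g-1}'(x)+(g-1)g\,P_{g-1}(x)=0,
\]
which is precisely the Legendre equation. There is no conceptual obstacle here; the only place demanding care is the bookkeeping in the Leibniz expansion, where one must correctly discard the vanishing terms and combine the binomial coefficients so that the coefficient of $u^{(g)}$ collapses from $2gx-2(g-1)x$ to $2x$ and that of $u^{(g-1)}$ from $g(g-1)-2g(g-1)$ to $-g(g-1)$.
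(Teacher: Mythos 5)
Your proof is correct, and it takes a genuinely different route from the paper. The paper proves the lemma via complex analysis: it invokes Cauchy's integral formula for the $(g-1)$-st derivative of the analytic function $z\mapsto(z^{2}-1)^{g-1}$ to obtain the Schl\"{a}fli contour representation of $P_{g-1}$, substitutes this into the Legendre operator, and observes that the result is the contour integral of an exact derivative $\frac{d}{dz}\bigl(\frac{(z^{2}-1)^{g}}{(z-x)^{g+1}}\bigr)$ around a closed circuit, hence zero. Your argument instead stays entirely within real polynomial algebra: from the seed identity $(x^{2}-1)u'=2(g-1)xu$ for $u=(x^{2}-1)^{g-1}$ you differentiate $g$ times with the Leibniz rule, and the bookkeeping you describe is exactly right --- the coefficient of $u^{(g)}$ collapses to $2x$, that of $u^{(g-1)}$ to $-g(g-1)$, and dividing by $2^{g-1}(g-1)!$ recovers the Legendre equation for $P_{g-1}$. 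What your approach buys is self-containedness and elementarity: it uses nothing beyond the Rodriguez formula already stated in the paper and the Leibniz rule, with no appeal to analyticity or contour integration. What the paper's approach buys is the Schl\"{a}fli integral representation itself, which is of independent interest and generalizes to non-integer degree; but for the lemma as stated your derivation is the more economical one.
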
%

\[
\left(  1-x^{2}\right)  \frac{d^{2}f\left(  x\right)  }{dx^{2}}-2x\frac
{df\left(  x\right)  }{dx}+g\left(  g-1\right)  f\left(  x\right)  =0
\]
i.e%
\begin{equation}
\frac{d}{dx}\left(  \left(  x^{2}-1\right)  \frac{d}{dx}f(x)\right)
=g(g-1)f(x)\label{formLemmGuest2}%
\end{equation}

\begin{proof}
\ For an analytic function $f$ on $D$, by Cauchy formula, it holds%
\[
f^{\left(  g-1\right)  }\left(  x\right)  =\frac{\left(  g-1\right)  !}{2\pi
i}\int_{\gamma}\frac{f\left(  x\right)  }{\left(  z-x\right)  ^{g}}dz
\]
where $x$ is an interior point in $D$ and $\gamma$ is a regular circuit in $D
$ with $x$ in its interior. The variable $z$ runs on $\gamma$ in the positive
sense. Apply this formula to the analytic function
\[
f\left(  z\right)  =\left(  z^{2}-1\right)  ^{g-1},g=0,1,2,....
\]
By Rodriguez formula we obtain the following relation, known as Schl\"{a}fli
formula%
\[
P_{g-1}\left(  x\right)  =\frac{1}{2\pi i}\int_{\gamma}\frac{\left(
z^{2}-x\right)  ^{g-1}}{2^{g-1}\left(  z-x\right)  ^{g}}dz.
\]
Substituting now $f$ by $P_{g-1}$ in Legendre equation and applying the above
formula, we get
\begin{align*}
&  \left(  1-x^{2}\right)  \frac{d^{2}P_{g-1}}{dx^{2}}-2x\frac{dP_{g-1}}%
{dx}+g\left(  g-1\right)  P\\
&  =\frac{g}{2^{g-1}2\pi i}\int_{\gamma}\left(  \frac{d}{dz}\left(
\frac{\left(  z^{2}-1\right)  ^{g}}{\left(  z-x\right)  ^{g+1}}\right)
\right)  dz.
\end{align*}
Now
\[
\frac{g}{2^{g-1}2\pi i}\int_{\gamma}\left(  \frac{d}{dz}\left(  \frac{\left(
z^{2}-1\right)  ^{g}}{\left(  z-x\right)  ^{g+1}}\right)  \right)  dz=0.
\]
This can be written through
\[
\frac{d}{dx}\left(  \left(  x^{2}-1\right)  \frac{dP_{g-1}\left(  x\right)
}{dx}\right)  =g\left(  g-1\right)  P_{g-1}\left(  x\right)  .
\]
Indeed%
\[
\frac{d}{dx}\left(  \left(  x^{2}-1\right)  \frac{dP_{g-1}\left(  x\right)
}{dx}\right)  =2x\frac{dP_{g-1}\left(  x\right)  }{dx}+\left(  x^{2}-1\right)
\frac{d^{2}P_{g-1}}{dx^{2}}%
\]
and therefore%
\[
\frac{d}{dx}\left(  \left(  x^{2}-1\right)  \frac{dP_{g-1}\left(  x\right)
}{dx}\right)  =g\left(  g-1\right)  P_{g-1}\left(  x\right)
\]
which is%
\[
2x\frac{dP_{g-1}\left(  x\right)  }{dx}+\left(  x^{2}-1\right)  \frac
{d^{2}P_{g-1}}{dx^{2}}-g\left(  g-1\right)  P_{g-1}\left(  x\right)  =0
\]
which proves the claim.
\end{proof}

\bigskip We evaluate the local variance of the design of Guest for any point
$x$ in $\left[  -1,1\right]  $.

We now turn back to the points where the variance assumes its maximal values.
It holds%
\[
\pi\left(  x\right)  =\alpha\left(  x^{2}-1\right)  \frac{dP_{g-1}\left(
x\right)  }{dx}%
\]
hence%

\begin{align*}
\frac{d\pi\left(  x\right)  }{dx}  &  =\frac{d}{dx}\left(  \alpha\left(
x^{2}-1\right)  \frac{dP_{g-1}\left(  x\right)  }{dx}\right) \\
&  =\alpha g\left(  g-1\right)  P_{g-1}\left(  x\right)
\end{align*}
by Lemma \ref{LemmGuest2}.

Therefore%

\[
\frac{d^{2}\pi\left(  x\right)  }{dx^{2}}=\alpha g\left(  g-1\right)
\frac{dP_{g-1}\left(  x\right)  }{dx}.
\]

We evaluate the minimax variance, which we denote by $var_{\xi^{\ast}\text{ }%
}$.

It holds%

\begin{align*}
var_{\xi^{\ast}}\left(  \mathcal{L}_{n}(\widehat{f})(x)\right)   &
=\sum_{j=0}^{g-1}l_{j}^{2}\left(  x\right)  \frac{\sigma^{2}}{n_{j}}\\
&  =\sum_{j=0}^{g-1}\left(  \frac{\pi\left(  x\right)  }{\left(
x-x_{j}\right)  \left(  \frac{d\pi\left(  x\right)  }{dx}\right)  _{x_{j}}%
}\right)  ^{2}\frac{g\sigma^{2}}{n}\\
&  =\left(  \left(  x^{2}-1\right)  \frac{dP_{g-1}\left(  x\right)  }%
{dx}\right)  ^{2}\frac{\sigma^{2}}{g\left(  g-1\right)  ^{2}n}\sum_{j=0}%
^{g-1}\left(  \frac{1}{\left(  x-x_{j}\right)  P_{g-1}\left(  x_{j}\right)
}\right)  ^{2}.
\end{align*}

Making use of Lobatto formula and after some calculus (see
\cite{Hildebrand1956}) , we obtain%

\[
var_{\xi^{\ast}}\left(  \mathcal{L}_{n}(\widehat{f})(x)\right)  =\left(
1+\frac{x^{2}-1}{g\left(  g-1\right)  }\left(  \frac{d^{2}P_{g-1}\left(
x\right)  }{dx^{2}}\right)  ^{2}\right)  \frac{g\sigma^{2}}{n}.
\]

\bigskip As a consequence, using Guest minimax design, the maximal variance of
the interpolation is obtained at the boundaries $x=-1$ and $x=1.$ By symmetry
the minimal variance of the interpolation holds when $x=0.$

In the extrapolation zone, namely for large $\left\vert x\right\vert >1$

\bigskip%
\[
\frac{dP_{g-1}\left(  x\right)  }{dx}\thicksim\left(  g-1\right)
\frac{\left(  2\left(  g-1\right)  \right)  !}{2^{g-1}\left(  \left(
g-1\right)  !\right)  ^{2}}x^{g-2}.
\]
In the extrapolation zone this yields to the approximation%

\[
var_{\xi^{\ast}}\left(  \mathcal{L}_{n}(\widehat{f})(x)\right)  \thicksim
\left(  g-1\right)  \left(  \frac{\left(  2\left(  g-1\right)  !\right)
}{2^{g-1}\left(  \left(  g-1\right)  !\right)  ^{2}}\right)  ^{2}x^{2\left(
g-1\right)  }\frac{\sigma^{2}}{g-1}.
\]

\bigskip

Considering the points $x_{0}=-1,x_{g-1}=1$ which are also points of maximum
variance we see that the maximal variance cannot exceed $\frac{g}{n}\sigma
^{2}.$

\bigskip

\bigskip We have obtained the optimal minimax design in the interpolation range.

We now prove that this design is not suitable for extrapolation.

\section{The interplay between the Hoel-Levine and the Guest designs}

Without loss of generality we may consider the case when $c>1$; by
\[
var_{\xi}\left(  \mathcal{L}_{n}(\widehat{f})(x)\right)  =\sum_{j=0}%
^{g-1}l_{j}^{2}(x)\frac{\sigma^{2}}{n_{j}}%
\]
the variance of $\mathcal{L}_{n}(\widehat{f})(x)$, say $var_{\xi}\left(
\mathcal{L}_{n}(\widehat{f})(x)\right)  $ is an increasing function of $x$ for
$x>1$ for any design $\xi$ since \ the mapping $x\rightarrow l_{j}^{2}(x)$
increases for $x\geq1.$ It follows that for any $c>1$ the Hoel Levine design
$\xi_{c}$ is the minimax optimal extrapolation design on $\left(  1,c\right)
$ namely it solves%
\[
\min_{\xi\in\mathcal{M}_{1}^{\ast}}\max_{x\in(1,c]}var_{\xi}\left(
\mathcal{L}_{n}(\widehat{f})(x)\right)  .
\]
However there is no reason that $\xi_{c}$ be minimax optimal on whole $\left[
-1,c\right]  $ since it might not solve
\[
\min_{\xi\in\mathcal{M}_{1}^{\ast}}\max_{x\in\lbrack-1,c]}var_{\xi}\left(
\mathcal{L}_{n}(\widehat{f})(x)\right)  .
\]

\bigskip We consider the optimal minimax design on $\left[  -1,c\right]  $
with $c>1$ and discuss its existence and properties.

On $\left[  -1,1\right]  $ the optimal minimax design is Guest's design. We
will prove (see Proposition \ref{PropGuestnonOpt} hereunder) that this design
is not minimax optimal on $\left[  -1,c\right]  $ with $c>1$ for large $c.$

At the contrary we prove (Proposition \ref{PropHoel Levine sur (1,c)})
hereunder that the Hoel Levine design $\xi_{c}$ is minimax optimal on $\left[
1,c\right]  .$

Finally we prove (Proposition \ref{PropHLOpt}) that there exists a unique
$c^{\ast}>>1$ such that $\xi_{c^{\ast}\text{ }}$is minimax optimal on $\left[
-1,c^{\ast}\right]  .$

\begin{proposition}
\label{PropHoel Levine sur (1,c)}The Hoel Levine optimal design $\xi_{c}$ is
minimax optimal on $\left[  1,c\right]  $ for $c>1$ as proved in Section 2
(substitute $c<-1$ by $c>1$)$.$
\end{proposition}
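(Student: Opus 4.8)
The plan is to exploit the min--max interchange that becomes available once one notices that, for \emph{every} competing design, the inner maximum over $[1,c]$ is attained at the \emph{same} abscissa, namely the right endpoint $x=c$. This reduces the minimax problem on $[1,c]$ to the single-point extrapolation problem already solved in Section 2, so that no new optimisation is needed.

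First I would record the monotonicity stated just before the proposition: for any design $\xi$ with nodes $x_{0}<\dots<x_{g-1}$ in $[-1,1]$,
\[
var_{\xi}\left(\mathcal{L}_{n}(\widehat{f})(x)\right)=\sigma^{2}\sum_{j=0}^{g-1}\frac{l_{j}^{2}(x)}{n_{j}}
\]
is increasing on $(1,\infty)$. The reason is that each $l_{j}$ has all its $g-1$ roots (the other nodes) inside $[-1,1]$; by Rolle's theorem the $g-2$ roots of $\frac{d}{dx}l_{j}$ are trapped strictly between consecutive nodes and hence also lie in $(-1,1)$, so on $[1,\infty)$ both $l_{j}$ and $\frac{d}{dx}l_{j}$ keep the sign of their common leading coefficient. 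Thus $\frac{d}{dx}l_{j}^{2}(x)=2\,l_{j}(x)\,\frac{d}{dx}l_{j}(x)>0$ there, and the variance, being a positive combination of such terms, is increasing. By continuity it follows that for every $\xi$,
\[
\max_{x\in[1,c]}var_{\xi}\left(\mathcal{L}_{n}(\widehat{f})(x)\right)=var_{\xi}\left(\mathcal{L}_{n}(\widehat{f})(c)\right).
\]

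Next I would use this identity to decouple the two operations. Taking the infimum over all designs in $\mathcal{M}_{[-1,1]}^{\ast}$ on both sides gives
\[
\min_{\xi}\max_{x\in[1,c]}var_{\xi}\left(\mathcal{L}_{n}(\widehat{f})(x)\right)=\min_{\xi}var_{\xi}\left(\mathcal{L}_{n}(\widehat{f})(c)\right).
\]
The right-hand side is precisely the pointwise extrapolation problem treated in Section 2. That analysis is invariant under the reflection $x\mapsto-x$ (the Chebyshev nodes being symmetric about $0$), so the argument given there for a target $c<-1$ transfers verbatim to a target $c>1$, and its minimiser is the Hoel--Levine design $\xi_{c}$ (the Chebyshev nodes together with the frequencies $n_{j}\propto|l_{j}(c)|$). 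Hence $\xi_{c}$ attains the common value appearing on the right.

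Finally I would close the loop: since the maximum of $var_{\xi_{c}}\!\left(\mathcal{L}_{n}(\widehat{f})(\cdot)\right)$ over $[1,c]$ is itself attained at $c$, the design $\xi_{c}$ realises $\min_{\xi}\max_{x\in[1,c]}var_{\xi}$, i.e. it is minimax optimal on $[1,c]$. The only step requiring genuine care is the monotonicity claim, and specifically the sign bookkeeping showing $l_{j}\frac{d}{dx}l_{j}>0$ for $x>1$; once that is secured the statement follows purely formally from the interchange of $\min$ and $\max$, which is legitimate here exactly because the inner maximiser is the fixed point $c$, independent of $\xi$.
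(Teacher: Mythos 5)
Your proof follows essentially the same route as the paper: for every design the variance is increasing on $(1,\infty)$, so the maximum over $[1,c]$ sits at $x=c$ and the minimax problem collapses to the pointwise extrapolation problem of Section 2, whose solution is $\xi_{c}$. The only difference is that you supply the Rolle's-theorem bookkeeping showing $l_{j}\,\frac{d}{dx}l_{j}>0$ for $x>1$, a fact the paper merely asserts, so your write-up is a correct and slightly more detailed version of the paper's argument.
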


\begin{proof}
This is a consequence of the fact that $x\rightarrow var_{\xi_{c}}\left(
\mathcal{L}_{n}(\widehat{f})(x)\right)  $ is an increasing function on
$\left[  1,c\right]  .$
\end{proof}

\begin{proposition}
\label{PropHLOpt}There exists $c_{1}>>1$ such that the Hoel Levine design
$\xi_{c^{\ast}}$ is minimax optimal on $[-1,c_{1}],$ i.e. it solves
\[
\min_{\xi\in\mathcal{M}_{1}^{\ast}}\max_{x\in\lbrack-1,c_{1}]}var_{\xi}\left(
\mathcal{L}_{n}(\widehat{f})(x)\right)  .
\]

\end{proposition}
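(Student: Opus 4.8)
The plan is to combine the monotonicity of the variance in the extrapolation range with the point-wise optimality of the Hoel--Levine design, thereby reducing the minimax problem on the whole of $[-1,c_1]$ to a single comparison at the extreme point $c_1$. To this end I would first split the inner maximum: for any design $\xi$, since $x\mapsto l_j^2(x)$ is increasing for $x\ge1$, the variance $var_\xi(\mathcal{L}_n(\widehat f)(x))=\sigma^2\sum_j l_j^2(x)/n_j$ is increasing on $(1,c_1]$, whence
\[
\max_{x\in[-1,c_1]}var_\xi\left(\mathcal{L}_n(\widehat f)(x)\right)=\max\Big(\max_{x\in[-1,1]}var_\xi\left(\mathcal{L}_n(\widehat f)(x)\right),\ var_\xi\left(\mathcal{L}_n(\widehat f)(c_1)\right)\Big).
\]
In particular $\max_{x\in[-1,c_1]}var_\xi\ge var_\xi(\mathcal{L}_n(\widehat f)(c_1))$ for every $\xi$.

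Next I would invoke the point optimality established in Section~2: the Hoel--Levine design $\xi_{c_1}$ minimises the variance evaluated at the single point $c_1$, so $var_\xi(\mathcal{L}_n(\widehat f)(c_1))\ge var_{\xi_{c_1}}(\mathcal{L}_n(\widehat f)(c_1))$ for every $\xi\in\mathcal{M}_1^*$. Chaining the two facts yields the universal lower bound
\[
\max_{x\in[-1,c_1]}var_\xi\left(\mathcal{L}_n(\widehat f)(x)\right)\ \ge\ var_{\xi_{c_1}}\left(\mathcal{L}_n(\widehat f)(c_1)\right)\qquad\text{for all }\xi\in\mathcal{M}_1^*.
\]
Thus it suffices to exhibit a $c_1\gg1$ for which the maximum of the $\xi_{c_1}$-variance over $[-1,c_1]$ is itself attained at $c_1$, that is for which $\max_{x\in[-1,1]}var_{\xi_{c_1}}(\mathcal{L}_n(\widehat f)(x))\le var_{\xi_{c_1}}(\mathcal{L}_n(\widehat f)(c_1))$; for such a $c_1$ the design $\xi_{c_1}$ attains the lower bound and is therefore minimax optimal on $[-1,c_1]$.

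To produce $c_1$ I would run an asymptotic comparison as the extrapolation point $c$ tends to $+\infty$. The nodes of $\xi_c$ are the fixed Chebyshev nodes, and only the weights $w_j^\ast(c)=n|l_j(c)|/\sum_i|l_i(c)|$ depend on $c$. As $c\to\infty$ one has $|l_j(c)|\sim|a_j|\,c^{g-1}$ with $a_j\neq0$ the leading coefficient of $l_j$, so $w_j^\ast(c)/n\to|a_j|/\sum_i|a_i|>0$. Hence $var_{\xi_c}(\mathcal{L}_n(\widehat f)(x))=\sigma^2\sum_j l_j^2(x)/w_j^\ast(c)$ converges, uniformly on the compact interval $[-1,1]$, to a finite limiting function, so the interpolation maximum $\max_{x\in[-1,1]}var_{\xi_c}$ remains bounded as $c\to\infty$. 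On the other hand, substituting the optimal weights in (\ref{varx}) gives $var_{\xi_c}(\mathcal{L}_n(\widehat f)(c))=\sigma^2(\sum_j|l_j(c)|)^2$, which up to lower order is a positive multiple of $c^{2(g-1)}$ and diverges to $+\infty$. Comparing a bounded quantity with a divergent one furnishes a threshold $c_1\gg1$ beyond which $var_{\xi_{c_1}}(\mathcal{L}_n(\widehat f)(c_1))\ge\max_{x\in[-1,1]}var_{\xi_{c_1}}$, exactly the inequality required above.

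The main obstacle is precisely this boundedness of the interpolation maximum: one must rule out that some weight $w_j^\ast(c)$ collapses to $0$ as $c\to\infty$, for otherwise the term $l_j^2(x)/w_j^\ast(c)$ could blow up on $[-1,1]$ and keep pace with the extrapolation value, destroying the comparison. This is secured by the non-vanishing of the leading coefficients $a_j$ of the elementary Lagrange polynomials at the Chebyshev nodes, which forces every limiting weight to be strictly positive. With this non-degeneracy in hand, the three displays above combine to give the minimax optimality of $\xi_{c_1}$ on $[-1,c_1]$.
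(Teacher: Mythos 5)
Your proof is correct and follows the same overall strategy as the paper: use the monotonicity of $x\mapsto l_j^2(x)$ for $x\ge 1$ to reduce the maximum over $[-1,c_1]$ to a comparison between the interpolation maximum on $[-1,1]$ and the value at $c_1$, invoke the pointwise optimality of the Hoel--Levine design at $c_1$ to get the universal lower bound, and then show that for $c$ large the extrapolation value dominates the interpolation maximum. Where you diverge is in how that last domination is established. The paper introduces the ratio $R(c)$ of the interpolation maximum of $var_{\xi_c}$ to $var_{\xi_c}(c)$, shows it is continuous and decreasing in $c$ (each $|l_j(c)|$ increases for $c>1$), tends to $+\infty$ as $c\to 1^{+}$ and to $0$ as $c\to\infty$, and thereby obtains a \emph{unique} threshold $c_1$ with $R(c_1)=1$; this sharper information is what the paper uses afterwards to remark that the Hoel--Levine design cannot be optimal for $c<c_1$. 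You instead argue by a pure asymptotic comparison: the optimal weights $w_j^{\ast}(c)$ converge to strictly positive limits because the leading coefficients of the $l_j$ at the Chebyshev nodes do not vanish, so the interpolation maximum stays bounded while $var_{\xi_c}(c)=\sigma^2\bigl(\sum_j|l_j(c)|\bigr)^2$ diverges like $c^{2(g-1)}$. This is cruder (it yields only \emph{some} large $c_1$, not the exact threshold) but is fully sufficient for the proposition as stated, and your explicit identification of the non-degeneracy of the limiting weights as the key point is a genuine clarification. You also state the reduction more tightly than the paper does: the paper phrases the required inequality $v_\eta^{\ast}<var_\eta(c)$ as holding for all designs $\eta$ supported on the Chebyshev nodes, whereas you correctly observe that it is only needed for the single design $\xi_{c_1}$ itself.
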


\begin{proof}
We have seen that for $1<x<c$, the solution provided by Hoel and Levine is
minimax optimal. We now consider the case when $\left[  1,c\right]  $ is
substituted by $\left[  -1;c\right]  $ with $c>1.$

In this case the minimax optimal solution still holds as the Hoel - Levine
design if $c$ "large enough" .

Indeed let $var_{\eta}\left(  \mathcal{L}_{n}(\widehat{f})(x)\right)  $ be the
variance under a design $\eta$ whose support consists in the Chebyshev nodes
in $\left[  -1;1\right]  $. The design $\eta$ at this point is not defined in
a unique way, since the values of $\eta(x_{j})$ is not specified.

The function
\[
x\rightarrow var_{\eta}\left(  \mathcal{L}_{n}(\widehat{f})(x)\right)
\]
is continuous on $\left[  -1,1\right]  .$ Denote
\[
v_{\eta}^{\ast}:=\max_{x\in\left[  -1,1\right]  }var_{\eta}\left(
\mathcal{L}_{n}(\widehat{f})(x)\right)  .
\]
Assume that there exists some $c>1$ which does not depend on $\eta$ such that
\begin{equation}
v_{\eta}^{\ast}<var_{\eta}\left(  \mathcal{L}_{n}(\widehat{f})(c)\right)
.\label{cDans HL}%
\end{equation}
In such case it holds
\[
\min_{\eta\in\mathcal{M}_{\left[  -1,1\right]  }^{\ast}}v_{\eta}^{\ast}%
<\min_{\eta\in\mathcal{M}_{\left[  -1,1\right]  }^{\ast}}var_{\eta}\left(
\mathcal{L}_{n}(\widehat{f})(c)\right)  .
\]
The minimizing measure on the right hand side of the above display is
precisely the extrapolation Hoel Levine design at $c$ since the
function$~x\rightarrow var_{\eta}\left(  \mathcal{L}_{n}(\widehat{f}%
)(x)\right)  $ is increasing for $x>1.$

It remains to prove that such $c$ satisfying (\ref{cDans HL}) exists.\bigskip

For a given $c$ let%

\begin{align*}
R\left(  c\right)   &  :=\frac{\max_{\left[  -1;1\right]  }\left(  var_{\eta
}\left(  \mathcal{L}_{n}(\widehat{f})(x)\right)  \right)  }{var_{\eta}\left(
\mathcal{L}_{n}(\widehat{f})(c)\right)  }\\
&  =\frac{\max_{\left[  -1;1\right]  }\left(  \sum_{j=0}^{g-1}\frac{l_{j}%
^{2}\left(  x\right)  }{n_{j}}\right)  }{\sum_{j=0}^{g-1}\frac{l_{j}%
^{2}\left(  c\right)  }{n_{j}\left(  c\right)  }},
\end{align*}

with%

\[
n_{j}\left(  c\right)  :=\frac{\left\vert l_{j}\left(  c\right)  \right\vert
}{\sum_{i=0}^{g-1}\left\vert l_{i}\left(  c\right)  \right\vert }\text{ }%
\]
where the $n_{j}\left(  c\right)  ,0\leq j\leq g-1$ are the optimal
frequencies of the Hoel - Levine design evaluated in $x=c.$

We intend to prove that some $c>1$ exists for which $R(c)<1.$

If this holds then%

\begin{align*}
R\left(  c\right)   &  =\frac{\left(  \sum_{i=0}^{g-1}\left\vert l_{i}\left(
c\right)  \right\vert \right)  \left(  \max_{\left[  -1;1\right]  }\left(
\sum_{j=0}^{g-1}\frac{l_{j}^{2}\left(  x\right)  }{\left\vert l_{j}\left(
c\right)  \right\vert }\right)  \right)  }{\left(  \sum_{i=0}^{g-1}\left\vert
l_{i}\left(  c\right)  \right\vert \right)  \left(  \sum_{j=0}^{g-1}\left\vert
l_{j}^{2}\left(  c\right)  \right\vert \right)  }\\
&  =\frac{\left(  \max_{\left[  -1;1\right]  }\left(  \sum_{j=0}^{g-1}%
\frac{l_{j}^{2}\left(  x\right)  }{\left\vert l_{j}\left(  c\right)
\right\vert }\right)  \right)  }{\left(  \sum_{j=0}^{g-1}\left\vert
l_{j}\left(  c\right)  \right\vert \right)  }.
\end{align*}

Any of the $\left\vert l_{j}\left(  c\right)  \right\vert $ is an increasing
function of $c$ for $c>1;$ therefore $R\left(  c\right)  $ is a decreasing
function of $c$ for $c>1.$ Since each $l_{j}\left(  c\right)  \rightarrow
\infty$ as $c\rightarrow\infty,$ $R\left(  c\right)  $ will approach $0$ as
$c\rightarrow\infty.$

Since $l_{j}\left(  c\right)  \rightarrow0$ as $c\rightarrow1,$ for all $j,$
$R\left(  c\right)  $ will become infinite as $c\rightarrow1$. But $R\left(
c\right)  $ is a continuous function of $c$ for $c>1;$ consequently there will
exist a unique value of $c$, denote by $c_{1}$, satisfying $R\left(
c_{1}\right)  =1.$ For $c>c_{1},$ $R\left(  c\right)  <1;$ this entails that
$c$ exists with (\ref{cDans HL}).

The proof of Proposition \ref{PropHLOpt} is completed.
\end{proof}

\begin{remark}
The analytic derivation of $c_{1}$ is presented in \cite{Levine1966}.
\end{remark}

It follows from the same type of arguments as that just used to reject the
possibility of a Legendre (or Guest) design for $c>1$ that the Hoel - Levine
design cannot be optimum for $c<c_{1}.$ From continuity considerations one
would expect the optimum design to gradually change from the Guest spacing and
weighting to the Hoel - Levine spacing and weighting as $c$ increases from $1$
to $c_{1}.$\bigskip\ This is still an open question.

\begin{proposition}
\label{PropGuestnonOpt} The Guest design $\xi^{\ast}$ is not minimax optimal
on $\left[  -1,c\right]  $ for any $c>1$, which is to say that it not an
optimal extrapolating design.
\end{proposition}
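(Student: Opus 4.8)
The plan is to prove non-optimality by exhibiting, for each fixed $c>1$, a competing design whose maximal variance on $[-1,c]$ is strictly smaller than that of the Guest design $\xi^{\ast}$. The entry point is the observation that the maximum of $x\mapsto var_{\xi^{\ast}}(\mathcal{L}_{n}(\widehat{f})(x))$ over $[-1,c]$ is attained only at the single point $x=c$. Indeed, on $[-1,1]$ the Guest variance equioscillates and attains its maximal value at the $g$ extremal abscissae, in particular at $\pm1$, where it equals $\sigma^{2}g^{2}/n$; and since each $x\mapsto l_{j}^{2}(x)$ is increasing for $x\geq1$ (as used repeatedly above), $x\mapsto var_{\xi^{\ast}}(\mathcal{L}_{n}(\widehat{f})(x))$ is strictly increasing on $(1,\infty)$. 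Hence $var_{\xi^{\ast}}(\mathcal{L}_{n}(\widehat{f})(x))<var_{\xi^{\ast}}(\mathcal{L}_{n}(\widehat{f})(c))$ for every $x\in[-1,c)$, so the peak at $c$ is \emph{strict and isolated}, with a fixed gap above the maximum taken over $[-1,1]$.

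Next I would keep the Guest nodes fixed and perturb only the frequencies. Writing $V_{w}(x):=\sigma^{2}\sum_{j=0}^{g-1}l_{j}^{2}(x)/w_{j}$ for the variance with weights $w=(w_{0},\dots,w_{g-1})$ subject to $\sum_{j}w_{j}=n$, the Guest design uses the uniform weights $w_{j}^{\ast}=n/g$. By the Proposition of Step 1 (the solution of Problem (\ref{Pb1})), the weights minimizing $V_{w}(c)$ at the single point $c$ are $w_{j}\propto\left\vert l_{j}(c)\right\vert$, which are not uniform for $c>1$. Consequently $w^{\ast}$ is not a critical point of $w\mapsto V_{w}(c)$ on the simplex $\{\sum_{j}w_{j}=n\}$, and there is a feasible direction $\delta$ with $\sum_{j}\delta_{j}=0$ along which the directional derivative $\frac{d}{dt}V_{w^{\ast}+t\delta}(c)\big\vert_{t=0}=-\sigma^{2}(g^{2}/n^{2})\sum_{j}l_{j}^{2}(c)\delta_{j}$ is strictly negative. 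Concretely one may take $\delta_{j}=\left\vert l_{j}(c)\right\vert-\frac{1}{g}\sum_{i}\left\vert l_{i}(c)\right\vert$, for which the Chebyshev sum inequality applied to the similarly ordered sequences $\left\vert l_{j}(c)\right\vert$ and $\left\vert l_{j}(c)\right\vert^{2}$ gives $\sum_{j}l_{j}^{2}(c)\delta_{j}>0$.

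I would then move to the perturbed design with weights $w^{\ast}+t\delta$ for small $t>0$ and show $\max_{[-1,c]}V_{w^{\ast}+t\delta}<V_{w^{\ast}}(c)$. Since $x\mapsto\frac{d}{dt}V_{w^{\ast}+t\delta}(x)\big\vert_{t=0}$ is continuous in $x$ and strictly negative at $x=c$, it remains $\leq-\kappa<0$ on some interval $[c-\epsilon,c]$; there $V_{w^{\ast}+t\delta}(x)\leq V_{w^{\ast}}(x)-\kappa t+O(t^{2})\leq V_{w^{\ast}}(c)-\kappa t+O(t^{2})$. On the complementary compact set $[-1,c-\epsilon]$, where $V_{w^{\ast}}$ stays below $V_{w^{\ast}}(c)$ by a fixed amount, the uniform convergence $V_{w^{\ast}+t\delta}\to V_{w^{\ast}}$ as $t\to0$ keeps the values strictly below $V_{w^{\ast}}(c)$ for small $t$. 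Combining the two estimates yields $\max_{[-1,c]}V_{w^{\ast}+t\delta}<V_{w^{\ast}}(c)=\max_{[-1,c]}var_{\xi^{\ast}}(\mathcal{L}_{n}(\widehat{f})(\cdot))$, so $\xi^{\ast}$ is not minimax optimal on $[-1,c]$ for any $c>1$.

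The main obstacle is precisely this last step: lowering the peak at $c$ by a weight change might \emph{a priori} raise the variance elsewhere on $[-1,c]$ above the old maximum. The strict, isolated nature of the peak at $c$ established in the first paragraph, combined with continuity of the directional derivative in $x$, is exactly what rules this out, localizing the control to a neighbourhood of $c$ and handling the remainder by uniform convergence. A minor point to dispatch separately is the degenerate possibility that the numbers $\left\vert l_{j}(c)\right\vert$ are all equal at some isolated $c>1$ (making the uniform weights momentarily optimal for the point $c$); there one recovers a strict improvement by additionally perturbing the nodes, using that the Guest nodes — the roots of $dP_{g-1}/dx$ together with $\pm1$ — do not coincide with the Chebyshev nodes that Hoel–Levine optimality at $c$ would require.
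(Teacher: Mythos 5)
Your proof is correct, but it takes a genuinely different route from the paper's. The paper disposes of the proposition in two lines: it invokes Proposition \ref{PropHoel Levine sur (1,c)} together with uniqueness of the solution of the underlying optimization problem, and concludes that the Guest design, being distinct from the Hoel--Levine design, cannot be minimax optimal. You instead build an explicit competitor: you show the Guest variance on $\left[-1,c\right]$ peaks only at $x=c$ with a strict gap over its value on $\left[-1,1\right]$, perturb the frequencies at the fixed Guest nodes in the direction of the Hoel--Levine weights for $c$ (Chebyshev's sum inequality giving strict first-order descent of the value at $c$), and use the isolation of the peak together with uniform convergence to keep the maximum elsewhere below the old one. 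Your argument is longer but more self-contained and arguably more complete: the paper's comparison really only shows that the Guest design is beaten \emph{at the single point} $c$ by the design $\xi_{c}$, which settles minimax optimality on all of $\left[-1,c\right]$ only when $c\geq c_{1}$, where $\xi_{c}$ is itself known (Proposition \ref{PropHLOpt}) to be minimax on the whole interval; your localized perturbation also covers $1<c<c_{1}$, where neither named design is minimax and the paper leaves the optimal design open. One remark: the degenerate case you flag, namely all $\left\vert l_{j}\left(c\right)\right\vert$ equal, in fact never occurs, because the Guest nodes are symmetric about $0$; writing $\left\vert l_{j}\left(c\right)\right\vert =\pi\left(c\right)/\left(\left(c-\widetilde{x}_{j}\right)\left\vert \pi^{\prime}\left(\widetilde{x}_{j}\right)\right\vert \right)$ and using $\left\vert \pi^{\prime}\left(\widetilde{x}_{j}\right)\right\vert =\left\vert \pi^{\prime}\left(-\widetilde{x}_{j}\right)\right\vert$ shows that the node $-\widetilde{x}_{j}$ carries a strictly larger value than $\widetilde{x}_{j}$ whenever $\widetilde{x}_{j}<0$, so your main perturbation argument already handles every $c>1$ without the auxiliary node perturbation.
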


\begin{proof}
By Proposition \ref{PropHoel Levine sur (1,c)} the Hoel Levine design on
$\left[  1,c\right]  $ is minimax optimal for large $c>1.$ By uniqueness of
the optimal design, following from the optimization problem, we deduce that
Guest design cannot coincide with this design.
\end{proof}

\section{ Confidence bound for interpolation/extrapolation designs}

Using a minimax optimal design we may produce a confidence bound for $f(x)$ at
any point $x$ in $\left[  -1,1\right]  $ or for $x$ far away from $\left[
-1,1\right]  $ $.$ We thus consider two cases for the location of $x.$ When $x
$ belongs to $\left[  -1,1\right]  $ then the optimal design is the Guest one.
By Proposition \ref{PropHLOpt} the Hoel Levine design is minimax on $\left[
-1,c_{1}\right]  $ for large $c_{1}.$ The minimax variance on $\left[
-1,c_{1}\right]  $ is therefore the variance of $\widehat{f\left(
c_{1}\right)  }$ since $var\left(  \widehat{f\left(  x\right)  }\right)  $ is
an increasing function of the variable $x$ for $x>1.$

Write%

\[
f\left(  x\right)  =\sum_{j=0}^{g-1}l_{j}\left(  x\right)  f\left(
x_{j}\right)  =\mathbf{l}\left(  x\right)  \mathbf{f}\left(
\widetilde{\mathbf{x}}\right)
\]

where $\widetilde{\mathbf{x}}:=\left(  x_{0},...,x_{g-1}\right)  ^{\prime}$
are the Chebychev nodes , $\mathbf{l}\left(  x\right)  :=\left(  l_{0}\left(
x\right)  ,...,l_{g-1}\left(  x\right)  \right)  ^{\prime}$ and%

\[
\text{ }\mathbf{f}\left(  \widetilde{\mathbf{x}}\right)  :=\left(
\begin{tabular}
[c]{l}%
$f\left(  x_{0}\right)  $\\
$.$\\
$f\left(  x_{j}\right)  $\\
$.$\\
$f\left(  x_{g-1}\right)  $%
\end{tabular}
\right)  .
\]

Assume that $y_{i}\left(  x_{j}\right)  :=f\left(  x_{j}\right)
+\varepsilon_{i,j}$ with $\varepsilon_{i,j}\thicksim N\left(  0;1\right)  $
i.i.d, for all $i$ and $j$, $i=1,...,n_{j}$, $j=0,..,g-1,$ $\sum_{j=0}%
^{g-1}n_{j}=n,$ where the $n$ observations are measured on $\left[
-1;1\right]  .$ The pointwise unbiased estimator of $f\left(  c\right)  $,
$c>1,$ is given by
\[
\widehat{f\left(  c\right)  }:=\mathbf{l}\left(  x\right)  \widehat{\mathbf{f}%
\left(  \widetilde{\mathbf{x}}\right)  }%
\]
where
\[
\widehat{\mathbf{f}\left(  \widetilde{\mathbf{x}}\right)  }=\left(  \frac
{1}{n_{0}}\sum_{i=1}^{n_{0}}y_{i}\left(  x_{0}\right)  ,...,\frac{1}{n_{g-1}%
}\sum_{i=1}^{n_{g-1}}y_{i}\left(  x_{g-1}\right)  \right)  .
\]

Since the distribution of the $y_{i}\left(  x_{j}\right)  ^{\prime}s$ is
$N\left(  f\left(  x_{j}\right)  ,1\right)  $ for all $i=1,...,n_{j}$ and
every $j=0,...,g-1$, the variance of the estimator $\widehat{f\left(
c_{1}\right)  }$ is given by%

\begin{align*}
var\left(  \widehat{f\left(  c_{1}\right)  }\right)   &  =var\left(
\sum_{j=0}^{g-1}l_{j}\left(  c_{1}\right)  \frac{\sum_{i=1}^{n_{j}}%
y_{i}\left(  x_{j}\right)  }{n_{j}}\right) \\
&  =\sum_{j=0}^{g-1}\frac{\left(  l_{j}\left(  c_{1}\right)  \right)  ^{2}%
}{n_{j}}.
\end{align*}

where the $n_{j}$'s are the frequencies of the Hoel Levine design evaluated at
point $c_{1}$ (which is indeed the minimax optimal design on $\left[
-1,c_{1}\right]  $ as argued above)$.$ The confidence set for $f(c_{1})$ is
given by%

\[
C_{n}:=\left(  \mathbf{l}\left(  c_{1}\right)  \right)  ^{\prime
}\widehat{\mathbf{f}\left(  \widetilde{\mathbf{x}}\right)  }\pm\sqrt
{p_{\alpha}\sum_{j=0}^{g-1}\frac{\left(  l_{j}\left(  c_{1}\right)  \right)
^{2}}{n_{j}}}.
\]

where
\[
\Pr\left(  N(0,1)>p_{\alpha}\right)  =1-\alpha
\]
and $N(0,1)$ is a random variable distributed with a standard normal law. It
holds%
\[
\Pr\left(  C_{n}\ni f(c_{1})\right)  \geq1-\alpha.
\]

When the variance of the $\varepsilon_{i}$'s are unknown then it can be
approximated by
\[
s^{2}:=\frac{\sum_{j=0}^{g-1}\left(  n_{j}-1\right)  s_{j}^{2}}{n-g-2}%
\]
where
\[
s_{j}^{2}:=\frac{\sum_{i=1}^{n_{j}}\left(  y_{i}(x_{j})-\left(  \sum
_{i=1}^{n_{j}}y_{i}(x_{j})\right)  /n_{j}\right)  ^{2}}{n-g-2}.
\]

The confidence area for $f(x)$ becomes
\[
C_{n}:=\left(  \mathbf{l}\left(  x\right)  \right)  ^{\prime}%
\widehat{\mathbf{f}\left(  \widetilde{\mathbf{x}}\right)  }\pm\sqrt
{q_{\alpha/2}\sum_{j=0}^{g-1}\frac{\left(  l_{j}\left(  c_{1}\right)  \right)
^{2}}{n_{j}s_{j}}}%
\]
where
\[
\Pr\left(  \left\vert t_{g-2}\right\vert >q_{\alpha/2}\right)  =1-\alpha
\]
where $t_{g-2}$ is a Student r.v. with $g-2$ degrees of freedom.

\section{ An application of the Hoel - Levine design, a multivariate case}

\subsection{\bigskip Some examples}

The above discussion may be \ applied for more general situations including
the regression models. We refer to the location/scale models, which are of
broad interest. Let%

\[
Z=\frac{Y\left(  x\right)  -\mu\left(  x\right)  }{\sigma},
\]

$\left(  \sigma,\text{ }\mu\right)  \in%
%TCIMACRO{\U{211d} }%
%BeginExpansion
\mathbb{R}
%EndExpansion
^{+}\times\mathbf{F}$, with $\left(  \sigma,\text{ }\mu\right)  $ unknown and
$\mathbf{F}$ a known class of functions. The scale parameter $\sigma$ is
constant w.r.t. $x$ and $Z$ is a r.v. which is absolutely continuous w.r.t.
the Lebesgue measure. Its distribution is assumed to be known and does not
depend on $x$.

Write%

\[
f\left(  x\right)  :=\mu\left(  x\right)  +\sigma E\left(  Z\right)  ,
\]

\[
\varepsilon:=\sigma Z-\sigma E\left(  Z\right)  ,
\]

and therefore write the location/scale model as%

\[
Y\left(  x\right)  =f\left(  x\right)  +\varepsilon\text{.}%
\]

We consider some examples.

\bigskip

\begin{example}
The importance of the Weibull distribution in Reliability is well known.
Denote $T$ a Weibull r.v. with distribution function%
\[
F\left(  t\right)  =1-\exp\left(  -\left(  \frac{t}{\mu\left(  x\right)
}\right)  ^{\beta}\right)  ,t\geq0\text{.}%
\]

\end{example}

It can be written into%

\[
\ln T=\ln\mu\left(  x\right)  +\frac{1}{\beta}\ln\left(  -\ln\left(
1-F\left(  T\right)  \right)  \right)  \text{,}%
\]

and therefore%

\[
Y\left(  x\right)  =\ln\mu\left(  x\right)  +\sigma Z.
\]

where we wrote%

\[
Y\left(  x\right)  :=\ln T\text{, }\sigma:=\frac{1}{\beta}\text{, \ }%
Z:=\ln\left(  -\ln\left(  1-F\left(  T\right)  \right)  \right)  \text{.}%
\]

The model is therefore%

\[
Z=\frac{Y\left(  x\right)  -\ln\mu\left(  x\right)  }{\sigma}.
\]

Observe that%

\[
\Pr\left(  Z>t\right)  =e^{-e^{t}},t>0.
\]

Thus $Z$ is the Gumbel standard r.v.

Write the above model defining%

\[
\varepsilon:=\sigma Z-\sigma E\left(  Z\right)  ,
\]

so that%

\[
Y\left(  x\right)  =f\left(  x\right)  +\varepsilon\text{,}%
\]

where%

\[
f\left(  x\right)  :=\ln\mu\left(  x\right)  +\sigma E\left(  Z\right)  .
\]

\begin{example}
For a Gaussian r.v. $X\thicksim N\left(  \mu\left(  x\right)  ,\sigma
^{2}\right)  $\bigskip, it holds
\end{example}%

\[
Z=\frac{X-\mu\left(  x\right)  }{\sigma}\thicksim N\left(  0,1\right)  .
\]

\begin{example}
A regression model is clearly of the preceding type.
\end{example}

\begin{example}
Assume that $T$ is logistic, i.e.
\end{example}%

\[
F\left(  t\right)  =1-\left(  1+\exp\left(  \frac{t-f\left(  x\right)  }%
{\beta}\right)  \right)  ^{-1}.
\]
\bigskip

When $\beta=1$, we may write%

\begin{align*}
1-F\left(  t\right)   &  =\frac{1}{1+\exp\left(  t-f\left(  x\right)  \right)
},\text{ }\\
1+\exp\left(  t-f\left(  x\right)  \right)   &  =\frac{1}{1-F\left(  t\right)
}\text{, }\exp\left(  t-f\left(  x\right)  \right)  =\frac{F\left(  t\right)
}{1-F\left(  t\right)  }.
\end{align*}

It is enough to state%

\[
Z:=\ln\frac{F\left(  t\right)  }{1-F\left(  t\right)  }\text{, \ }%
\varepsilon:=Z-E\left(  Z\right)  ,\text{ }Y:=T
\]

to get%

\[
Y=f\left(  v\right)  +E\left(  Z\right)  +\varepsilon\text{.}%
\]

\subsection{\bigskip Multivariate optimal designs; a special case}

We extend the results of the above sections to a bivariate setting in\ a
reliability context; extension to multivariate similar cases is straightforward.

We consider the extrapolation problem with two variables.

Let%

\[
f:%
%TCIMACRO{\U{211d} }%
%BeginExpansion
\mathbb{R}
%EndExpansion
^{2}\rightarrow%
%TCIMACRO{\U{211d} }%
%BeginExpansion
\mathbb{R}
%EndExpansion
\text{, }\mathbf{x:=(}x,y)\mapsto f\left(  x,y\right)  :=\sum_{i_{1}=0}%
^{g_{1}-1}\sum_{i_{2}=0}^{g_{2}-1}a_{i_{1}i_{2}}\text{ }x^{i_{1}}y^{i_{2}%
}\text{ , \ }a_{i_{1}i_{2}}\in%
%TCIMACRO{\U{211d} }%
%BeginExpansion
\mathbb{R}
%EndExpansion
\text{,}%
\]

be a polynomial in the two variables $x,y$ with partial degrees $g_{i}-1$ ,
$i=1,2$ in the variables $x,y$. The polynomial $f$ has $M_{1}$ $:=g_{1}g_{2}$
unknown coefficients.

In order to determine these coefficients we observe $f$ on a finite set
$\mathcal{E}$ in $%
%TCIMACRO{\U{211d} }%
%BeginExpansion
\mathbb{R}
%EndExpansion
^{2}$. The fact that $\mathcal{E}$ consists in $M_{1}$ distinct points in $%
%TCIMACRO{\U{211d} }%
%BeginExpansion
\mathbb{R}
%EndExpansion
^{2}$ is not sufficient for the estimation of the coefficients; it is
necessary that these points do not belong to an algebraic curve (or algebraic
hypersurface in higher dimension) . Indeed the identification for a polynomial
with many variables usually does not have a unique solution.\ For example
consider $n$ points $\left\{  \left(  x_{i},y_{i}\right)
:i=0,...,n-1\right\}  \subset%
%TCIMACRO{\U{211d} }%
%BeginExpansion
\mathbb{R}
%EndExpansion
^{2}$, together with $n$ known values of $f$ on those points $\left\{
f\left(  x_{i},y_{i}\right)  :i=0,...,n-1\right\}  ;$ then there may not exist
a unique polynomial $P\left(  x,y\right)  $, such that%

\[
f\left(  x_{i},y_{i}\right)  =P\left(  x_{i},y_{i}\right)  \text{, }\left(
x_{i},y_{i}\right)  \in\mathcal{E}\text{.}%
\]

Indeed it is enough to consider the case when the $n$ distinct points are on a
line in $%
%TCIMACRO{\U{211d} }%
%BeginExpansion
\mathbb{R}
%EndExpansion
^{3}$. In this case there exists an infinite number of planes $z=ax+by+c$,
which contain the $n$ points $\left(  x_{i},y_{i}\right)  .$

We will therefore assume that the $M_{1}$ points\ which define
$\mathcal{E\subset%
%TCIMACRO{\U{211d} }%
%BeginExpansion
\mathbb{R}
%EndExpansion
}^{2}$ do not belong to an algebraic curve. This implies existence and
uniqueness for a polynomial which coincides with $f$ on $\mathcal{E}$, with
partial degree $g_{1}-1$ with respect to $x$ and $g_{2}-1$ w.r.t. $y.$ Denote
$P_{\mathcal{E}}\left(  f\right)  \left(  .\right)  $ this polynomial.

\bigskip

It can be proved that $\left(  x,y\right)  \rightarrow P_{\mathcal{E}}\left(
f\right)  \left(  x,y\right)  $ satisfies%

\[
f(x,y)=P_{\mathcal{E}}\left(  f\right)  \left(  x,y\right)  =\sum_{\left(
x_{i},y_{i}\right)  \in\mathcal{E}}f\left(  x_{i},y_{i}\right)  Q_{i}\left(
\mathcal{E}\text{,}\left(  x,y\right)  \right)
\]

where the polynomials $Q_{i}\left(  \mathcal{E}\text{,}\left(  x,y\right)
\right)  $ do not depend on $f.$ Indeed we may make $Q_{i}\left(
\mathcal{E}\text{, }.\right)  $ explicit; see e.g. \cite{Johnson1982} p 248-251.

Consider $\mathcal{E}$ a finite subset of the compact set $S:=\times_{i=1}%
^{2}\left[  a_{i},b_{i}\right]  $. Let the points $\left(  x_{i},y_{i}\right)
$ in $\mathcal{E}$ be%

\[
\left(  x_{i_{j}},y_{i_{j}}\right)
\]

$i_{j}=0,...,$ $g_{j}-1$ and $j=1,2$.

Define the elementary Lagrange polynomial in two variables by%

\begin{equation}
l_{i_{1}i_{2}}\left(  x,y\right)  :=l_{i_{1}}\left(  x\right)  l_{i_{2}%
}\left(  y\right) \label{lagrange-produit}%
\end{equation}

where%

\[
l_{i_{1}}\left(  x\right)  :=\frac{%
%TCIMACRO{\tprod \limits_{h_{1}\neq i_{1}\text{, }h_{1}=0}^{g_{1}-1}}%
%BeginExpansion
{\textstyle\prod\limits_{h_{1}\neq i_{1}\text{, }h_{1}=0}^{g_{1}-1}}
%EndExpansion
\left(  x-x_{h_{1}}\right)  }{%
%TCIMACRO{\tprod \limits_{h_{1}\neq i_{1}\text{, }h_{1}=0}^{g_{1}-1}}%
%BeginExpansion
{\textstyle\prod\limits_{h_{1}\neq i_{1}\text{, }h_{1}=0}^{g_{1}-1}}
%EndExpansion
\left(  x_{i_{1}}-x_{h_{1}}\right)  },l_{i_{2}}\left(  z\right)  :=\frac{%
%TCIMACRO{\tprod \limits_{h_{2}\neq i_{2}\text{, }h_{2}=0}^{g_{4}-1}}%
%BeginExpansion
{\textstyle\prod\limits_{h_{2}\neq i_{2}\text{, }h_{2}=0}^{g_{4}-1}}
%EndExpansion
\left(  z-z_{h_{2}}\right)  }{%
%TCIMACRO{\tprod \limits_{h_{2}\neq i_{2}\text{, }h_{2}=0}^{g_{4}-1}}%
%BeginExpansion
{\textstyle\prod\limits_{h_{2}\neq i_{2}\text{, }h_{2}=0}^{g_{4}-1}}
%EndExpansion
\left(  z_{i_{2}}-z_{h_{2}}\right)  }%
\]

are the elementary Lagrange polynomials with respect to the coordinates $x$
and $y.$

Clearly%

\[
l_{i_{1}i_{2}}\left(  x,y\right)  =\left\{
\begin{tabular}
[c]{l}%
$1$ if $\left(  x,y\right)  =\left(  x_{i_{1}},y_{i_{2}}\right)  $\\
$0$ \ otherwise
\end{tabular}
\right.  .
\]

The set%

\[
\left\{  l_{i_{1}i_{2}}\left(  x,y\right)  :i_{j}=0,...,g_{j}-1,j=1,2\right\}
\]

is a basis for the linear space of all polynomials with partial degree with
respect to the coordinate $x_{j}\ $less or equal $g_{j}-1$, for $j=1,2.$

The Gram matrix associated with this basis%

\[
\left(  l_{i_{1}i_{2}}\left(  x,y\right)  \right)  _{i_{1}=0,...,g_{1}%
-1,.i_{2}=0,...,g_{2}-1}%
\]

is therefore invertible; by uniqueness we have
\[
Q_{i}\left(  \mathcal{E}\text{,}\left(  x,y\right)  \right)  =l_{i_{1}i_{2}%
i}\left(  x,y\right)  .
\]

Therefore%

\[
P_{\mathcal{E}}\left(  f\right)  \left(  x,y\right)  :=\mathcal{L}\left(
P_{\mathcal{E}}\left(  f\right)  \right)  \left(  x,y\right)
\]

where we wrote%

\begin{equation}
\mathcal{L}\left(  P_{\mathcal{E}}\left(  f\right)  \right)  \left(
x,y\right)  =\sum_{i_{1}=0}^{g_{1}-1}\sum_{i_{2}=0}^{g_{2}-1}f\left(
x_{i_{1}},y_{i_{2}}\right)  \text{ }l_{i_{1\text{ }}i_{2\text{ }}}\left(
x,y\right)  \text{.}\label{Lagrange bivarie}%
\end{equation}
The above formula (\ref{Lagrange bivarie}) holds true since the nodes $\left(
x_{i_{l}},y_{i_{k}}\right)  $ belong to a rectangle (see \cite{Johnson1982}).

The polynomial $\mathcal{L}\left(  P_{\mathcal{E}}\left(  f\right)  \right)
\left(  x,y\right)  $ is called the bivariate Lagrange\ polynomial. The points
in $\mathcal{E}$ are the nodes for the interpolation of $f.$

When $f$ \ is not a polynomial but merely a function defined on $S$, which can
be extended by continuity on an open set $\mathcal{O}$ which contains $S$,
then the Lagrange interpolation scheme may be used as an approximation scheme
on $\mathcal{O}$; see \cite{Coatmelec1966}.

By uniqueness we adopt the notation
\[
P_{\mathcal{E}}\left(  f\right)  \left(  x,y\right)  =\mathcal{L}\left(
P_{\mathcal{E}}\left(  f\right)  \right)  \left(  x,y\right)  =f\left(
x,y\right)  .
\]

We now assume the following model%

\[
Z:=\frac{Y\left(  x,y\right)  -f\left(  x,y\right)  }{\sigma}%
\]

where $Z$ is a r.v. totally known in distribution with finite expectation
$E(Z)$ and finite variance $\eta^{2};$ the scale parameter $\sigma>0$ is
unknown and does not depend on $x,y;$ the coefficients of $P\left(
x,y\right)  $, \ $a_{i_{1}i_{2}}\in%
%TCIMACRO{\U{211d} }%
%BeginExpansion
\mathbb{R}
%EndExpansion
$, are also unknown. We will see that the optimal design does not depend on
the constants $\sigma^{2}$ nor $\eta^{2}$.

Denote%

\[
\varepsilon\left(  x,y\right)  :=\sigma Z-\sigma E\left(  Z\right)
\]
whose variance equals $\eta^{2}\sigma^{2}.$

It holds%

\[
Y\left(  x,y\right)  =f\left(  x,y\right)  +\sigma E\left(  Z\right)
+\varepsilon\left(  x,y\right)  .
\]

We assume further that $f\left(  x,y\right)  $ can be observed only on a
subset $S$ in $\mathbb{R}^{2}.$\ In the setting of accelerated runs, this
subset $S$ is the stressed domain; it will be assumed that it is a rectangle
$\left[  a_{1},b_{1}\right]  \times$ $\left[  a_{2},b_{2}\right]  $ in
$\mathbb{R}^{2}$, a choice which is achievable by the experimenter.\ This
shape allows for an important simplification for the definition and the
calculation of the optimal design.

The lexicographic order on $%
%TCIMACRO{\U{211d} }%
%BeginExpansion
\mathbb{R}
%EndExpansion
^{2}$ is defined as follows; for $\left(  x,y\right)  $ and $\left(
z,t\right)  $ in $%
%TCIMACRO{\U{211d} }%
%BeginExpansion
\mathbb{R}
%EndExpansion
^{2}\times%
%TCIMACRO{\U{211d} }%
%BeginExpansion
\mathbb{R}
%EndExpansion
^{2},\left(  x,y\right)  $ $\mathbf{\lesssim}$ $\left(  z,t\right)  $ iff
$x\leq z,y\leq t$.

Denote$\mathbf{\ a:=}\left(  a_{1},a_{2}\right)  $ be the point in $%
%TCIMACRO{\U{211d} }%
%BeginExpansion
\mathbb{R}
%EndExpansion
^{2}$ which describes the threshold between the standard operational values of
the environment and the stressed conditions. With respect to $\mathbf{a}$ the
stressed region is a rectangle $\left[  \mathbf{a,b}\right]  $ north-east with
respect to $\mathbf{a}$, with south-west corner at $\mathbf{a}$, whereas the
unstressed domain is the south west quadrant $U$ with north east corner at
$\mathbf{a.}$ We denote $\mathbf{u}$ a point in%

\[
U:=\left\{  \left(  x,y\right)  \mathbf{\in%
%TCIMACRO{\U{211d} }%
%BeginExpansion
\mathbb{R}
%EndExpansion
}^{2}\mathbf{:}\left(  x,y\right)  \mathbf{\lesssim a}\right\}  .
\]

We intend to find an optimal design in order to estimate the value of the
polynomial $f$ at point $\mathbf{u}$, hence we look for $\mathcal{E}$ and for
the number of observations on any of the points in $\mathcal{E}$, in such a
way to make the variance of the estimate of $P\left(  \mathbf{u}\right)  $ minimal.

Let $\left(  x_{i_{1}},y_{i_{2}}\right)  \in S$ $\ $be a node, i.e. a stress
configuration. We consider now the set of trials under this
configuration.\ Denoting \underline{$i$}$:=(i_{1},i_{2})\in\times_{j=1}%
^{2}\left\{  0,...,g_{j}-1\right\}  $ , we define $n(\underline{i})$ be the
total number of replications of the measurement $Y$ at point $\left(
x_{i_{1}},y_{i_{2}}\right)  .$ We denote $\underline{\mathbf{Y}}\left(
\underline{i}\right)  $ the vector of these measurements; We assume that the
coordinates of $\underline{\mathbf{Y}}\left(  \underline{i}\right)  $ are
ordered; this is the common procedure when looking at lifetimes of a number
$n(\underline{i})$ of identical systems operating in parallel during the
trial. So $\underline{\mathbf{Y}}\left(  \underline{i}\right)  :=\left(
Y_{\left(  1\right)  }\left(  \underline{i}\right)  ,...,Y_{\left(  n\left(
\underline{i}\right)  \right)  }\left(  \underline{i}\right)  \right)  $ is an
ordered sample obtained from an i.i.d. sample with size $n(\underline{i}).$

\bigskip

The system of equations which represents the observations is therefore%

\[
\left(  1\right)  \left\{
\begin{tabular}
[c]{l}%
$y_{\left(  1\right)  }\left(  x_{i_{1}},y_{i_{2}}\right)  =f\left(  x_{i_{1}%
},y_{i_{2}}\right)  +\sigma E\left(  Z\right)  +\varepsilon_{1}\left(
x_{i_{1}},y_{i_{2}}\right)  $\\
$...........................................................................$%
\\
$y_{\left(  k\right)  }\left(  x_{i_{1}},y_{i_{2}}\right)  =f\left(  x_{i_{1}%
},y_{i_{2}}\right)  +\sigma E\left(  Z\right)  +\varepsilon_{k}\left(
x_{i_{1}},y_{i_{2}}\right)  $\\
$.............................................................................
$\\
$y_{\left(  n\left(  \underline{i}\right)  \right)  }\left(  x_{i_{1}%
},y_{i_{2}}\right)  =f\left(  x_{i_{1}},y_{i_{2}}\right)  +\sigma E\left(
Z\right)  +\varepsilon_{n\left(  \underline{i}\right)  }\left(  x_{i_{1}%
},y_{i_{2}}\right)  $%
\end{tabular}
\right.  .
\]

It holds%

\[
\sum_{i_{1}=0}^{g_{1}-1}\sum_{i_{2}=0}^{g_{2}-1}n\left(  \underline{i}\right)
=M_{1}%
\]

with $M_{1}\in%
%TCIMACRO{\U{2115} }%
%BeginExpansion
\mathbb{N}
%EndExpansion
\diagdown\left\{  0\right\}  $ a fixed number.

We assume that the vectors of measurements $\underline{\mathbf{Y}}\left(
\underline{i}\right)  $ and $\underline{\mathbf{Y}}\left(  \underline{h}%
\right)  $ are independent for $\underline{i}\neq\underline{h}$.

The system $\left(  1\right)  $ may be written as%

\[
\underline{\mathbf{Y}}\left(  \underline{i}\right)  =X\theta+\varepsilon
\left(  \underline{i}\right)
\]

where $X$ is the matrix with $n\left(  \underline{i}\right)  $ rows and $2$ columns%

\[
X\left(  \underline{i}\right)  :=\left(
\begin{tabular}
[c]{ll}%
$1$ & $E\left(  Z\right)  $\\
$.$ & $.$\\
$1^{n\left(  \underline{i}\right)  }$ & $E\left(  Z\right)  $%
\end{tabular}
\right)
\]

and $\theta$ is a column vector with two rows%

\[
\theta:=\left(
\begin{tabular}
[c]{l}%
$f\left(  x_{i_{1}},y_{i_{2}}\right)  $\\
$\sigma$%
\end{tabular}
\right)  \text{.}%
\]

Finally
\[
\text{ }\underline{\varepsilon}\left(  \underline{i}\right)  :=\left(
\begin{array}
[c]{c}%
\varepsilon_{1}\left(  \underline{i}\right) \\
.\\
\varepsilon_{n\left(  \underline{i}\right)  }\left(  \underline{i}\right)
\end{array}
\right)  \text{, }\underline{\mathbf{Y}}\left(  \underline{i}\right)
:=\left(  \text{%
\begin{tabular}
[c]{l}%
$y_{\left(  1\right)  }\left(  x_{i_{1}},y_{i_{2}}\right)  $\\
$.$\\
$y_{\left(  n\left(  \underline{i}\right)  \right)  }\left(  x_{i_{1}%
},y_{i_{2}}\right)  $%
\end{tabular}
}\right)  \text{ }.
\]

Denote%

\[
\Omega^{-1}\left(  \underline{i}\right)  :=\left(  cov\left(  y_{\left(
a\right)  }\left(  \underline{i}\right)  ,y_{\left(  b\right)  }\left(
\underline{i}\right)  \right)  \right)  _{a,b}^{-1}%
\]
which we assume to exist for all $\underline{i}$. In the above display,
$\Omega^{-1}\left(  \underline{i}\right)  $ is a matrix of order
$n(\underline{i});$ the matrix $\Omega\left(  \underline{i}\right)  $ is not
the identity matrix of order $n\left(  \underline{i}\right)  $ since the
vector of observations is ordered according to its coordinates.
\[
\Omega^{-1}\left(  \underline{i}\right)  :=\left(
\begin{array}
[c]{ccc}%
\omega_{_{1,1}} & . & \omega_{1,n\left(  \underline{i}\right)  }\\
. & . & .\\
\omega_{n\left(  \underline{i}\right)  ,1} & . & \omega_{n\left(
\underline{i}\right)  ,n\left(  \underline{i}\right)  }%
\end{array}
\right)  .
\]

The expected value of the measurement $Y$ at point $\left(  x_{i_{1}}%
,y_{i_{2}}\right)  $ equals $f\left(  x_{i_{1}},y_{i_{2}}\right)  +\sigma
E\left(  Z\right)  .$ Denote $m_{i_{1},i_{2}\text{ }}$its GLS estimator

\bigskip%

\[
m_{\underline{i}}:=m_{i_{1},i_{2}\text{ }}=\left[  \left(  X^{\text{ }\prime
}\left(  \underline{i}\right)  \Omega^{-1}\left(  \underline{i}\right)
X\left(  \underline{i}\right)  \right)  \right]  ^{-1}X^{\text{ }\prime
}\left(  \underline{i}\right)  \Omega^{-1}\left(  \underline{i}\right)
\underline{\mathbf{Y}}\left(  \underline{i}\right)  .
\]

This estimator is strongly consistent.

We now define the estimator outside of the nodes.

Observe that for $\mathbf{u:=}\left(  x,y\right)  $%

\begin{align*}
\mathcal{L}\left(  P_{\mathcal{E}}\left(  f\right)  \right)  \left(
x,y\right)  +\sigma E\left(  Z\right)   & :=f\left(  x,y\right)  +\sigma
E\left(  Z\right) \\
& =\sum_{\left(  x_{i},y_{i}\right)  \in\mathcal{E}}\left(  f\left(
x_{i},y_{i}\right)  +\sigma E\left(  Z\right)  \right)  \text{ }l_{i_{1\text{
}}i_{2\text{ }}}\left(  x,y\right)  \text{.}%
\end{align*}

Denote $m(\mathbf{u)}$ the resulting estimator of $f\left(  \mathbf{u}\right)
+\sigma E\left(  Z\right)  $
\begin{align*}
m(\mathbf{u)}  & :=\sum_{\left(  x_{i},y_{i}\right)  \in\mathcal{E}%
}m_{\underline{i}}\text{ }l_{i_{1\text{ }}i_{2\text{ }}}\left(  \mathbf{u}%
\right) \\
& =\sum_{\left(  x_{i},y_{i}\right)  \in\mathcal{E}}m_{\underline{i}}\text{
}l_{i_{1\text{ }}}\left(  x\right)  l_{i_{2\text{ }}}\left(  y\right)  .
\end{align*}

This factorization relies on the fact that $S$ is a rectangle. We now evaluate
the variance of the unbiased estimator $m(\mathbf{u)}$; by independence of the
measurements on the nodes%

\begin{align*}
Var\left(  m(\mathbf{u)}\right)   & =\sum_{\underline{i}=\left(  i_{1}%
,i_{2}\right)  }^{2}\text{ }\left(  l_{i_{1\text{ }}}\left(  x\right)
l_{i_{2\text{ }}}\left(  y\right)  \right)  ^{2}var\left(  m_{\underline{i}%
}\right) \\
& =\sum_{\underline{i}=\left(  i_{1},i_{2}\right)  }\left(  l_{i_{1\text{ }}%
}\left(  x\right)  l_{i_{2\text{ }}}\left(  y\right)  \right)  ^{2}G\left(
\sigma^{2}\eta^{2},\Omega^{-1}\left(  \underline{i}\right)  ,X^{\text{ }%
\prime}\left(  \underline{i}\right)  \right)  ,
\end{align*}

where%

\begin{equation}
G\left(  \sigma^{2}\eta^{2},\Omega^{-1}\left(  \underline{i}\right)
,X^{\text{ }\prime}\left(  \underline{i}\right)  \right)  :=\sigma^{2}\eta
^{2}\left(  X^{\text{ }\prime}\left(  \underline{i}\right)  \Omega^{-1}\left(
\underline{i}\right)  X^{\text{ }}\left(  \underline{i}\right)  \right)
^{-1}X^{\text{ }\prime}\left(  \underline{i}\right)  .\label{G}%
\end{equation}

Note that $var\left(  \mathcal{L}\left(  P_{\mathcal{E}}\left(  f\right)
\right)  \left(  x,y\right)  \right)  \rightarrow0$, for $n\left(
\underline{i}\right)  \rightarrow\infty$, due to the fact that the generalized
lest-squares estimator is consistent under the present conditions.

The optimal design results as the solution to the following optimization problem,%

\[
\left\{
\begin{array}
[c]{c}%
\min\sum_{\underline{i}=\left(  i_{1},i_{2}\right)  }\left(  \text{
}l_{i_{1\text{ }}i_{2\text{ }}}\left(  x,y\right)  \right)  ^{2}G\left(
\sigma^{2}\eta^{2},\Omega^{-1}\left(  \underline{i}\right)  ,X^{\text{ }%
\prime}\left(  \underline{i}\right)  \right) \\
\mathbf{u}\in U\\
M_{1}=\sum_{\underline{i}}n\left(  \underline{i}\right)  .
\end{array}
\right.
\]

where the minimization is held on all choices of the set of measurements
(nodes) $\mathcal{E}$ and all frequencies $n\left(  \underline{i}\right)  .$

Although the problem generally has a numerical solution, in some practical
cases it is possible to obtain and analytic solution.

We explore a special case.

Define
\[
\Gamma:=\sum_{m,u}^{n\left(  \underline{i}\right)  }\omega_{m,u}\text{. }%
\]

Let $E\left(  Z\right)  =0$ and the distribution of $Y$ be symmetric around
$E\left(  Y(u)\right)  $.

In this case, $G\left(  \sigma^{2}\eta^{2},\Omega^{-1}\left(  \underline{i}%
\right)  ,X^{\text{ }\prime}\left(  \underline{i}\right)  \right)  $ becomes:%

\[
G_{1}:=G\left(  \sigma^{2}\eta^{2},\Omega^{-1}\left(  \underline{i}\right)
,X^{\text{ }\prime}\left(  \underline{i}\right)  \right)  =\frac{\sigma
^{2}\eta^{2}}{\Gamma}\text{ \ }%
\]

which depends on \underline{$i$} through $\Gamma;$ (See \cite{Lloyd1952} for
the proof).

In some cases $G_{1}$ may be simplified as follows,%

\begin{equation}
G_{2}:=g\left(  \sigma^{2}\eta^{2},\Omega^{-1}\left(  \underline{i}\right)
,X^{\text{ }\prime}\left(  \underline{i}\right)  \right)  =\frac{\sigma
^{2}\eta^{2}}{n\left(  \underline{i}\right)  }.\label{G2}%
\end{equation}

Indeed a necessary and sufficient condition for $G_{2}$ is%

\[
\left(
\begin{array}
[c]{c}%
1\\
.\\
1
\end{array}
\right)  ^{\prime}\Omega\left(  \underline{i}\right)  =\left(
\begin{array}
[c]{c}%
1\\
.\\
1
\end{array}
\right)  ^{\prime}\text{ .}%
\]
\ (see \cite{Downton1954}).

\bigskip In many cases the function $G$ takes on the form%

\[
\frac{\sigma^{2}\eta^{2}}{\alpha n\left(  \underline{i}\right)  +\beta}\left(
1+o\left(  1\right)  \right)
\]

where $\alpha$,$\beta$ are constants depending on the (known) distribution of
the random variable $Z,$ extending (\ref{G2}); see \cite{Celant2003}%
.{\LARGE \ }

The problem of determining the optimal design becomes%

\[
\left\{
\begin{array}
[c]{c}%
\min\sum_{\left(  i_{1},i_{2}\right)  }\frac{\left(  l_{\left(  i_{1}%
,i_{2}\right)  }\left(  \mathbf{u}\right)  \right)  ^{2}}{\alpha n\left(
\underline{i}\right)  +\beta}\\
M_{1}=\sum_{\underline{i}}n\left(  \underline{i}\right)  \text{, }n\left(
\underline{i}\right)  \in%
%TCIMACRO{\U{211d} }%
%BeginExpansion
\mathbb{R}
%EndExpansion
^{+}.
\end{array}
\right.
\]

where the minimum holds on the choice of the nodes $\mathcal{E}$ and on the
frequencies. Fix $\left(  x_{i_{1}},y_{i_{2}}\right)  $ and apply the Theorem
of Karush-Kuhn-Tucker to%

\[
\left\{
\begin{array}
[c]{c}%
\min\sum_{\left(  i_{1},i_{2}\right)  }\frac{\left(  l_{\left(  i_{1}%
,i_{2}\right)  }\left(  \mathbf{u}\right)  \right)  ^{2}}{\alpha n\left(
\underline{i}\right)  +\beta}\\
M_{1}=\sum_{\underline{i}}n\left(  \underline{i}\right)  \text{, }n\left(
\underline{i}\right)  \in%
%TCIMACRO{\U{211d} }%
%BeginExpansion
\mathbb{R}
%EndExpansion
^{+}%
\end{array}
\right.  .
\]

where the minimization is held on the frequencies $n\left(  \underline{i}%
\right)  .$ We obtain%
\[
\left[  n^{\ast}\left(  \underline{i}\right)  \right]  =\frac{\left\vert
l_{\left(  i_{1},i_{2}\right)  }\left(  \mathbf{u}\right)  \right\vert \left(
\alpha M_{1}+\beta%
%TCIMACRO{\tprod \limits_{j=1}^{2}}%
%BeginExpansion
{\textstyle\prod\limits_{j=1}^{2}}
%EndExpansion
g_{j}\right)  }{\sum_{\left(  i_{1},i_{2}\right)  =\underline{0}}\left\vert
l_{\left(  i_{1},i_{2}\right)  }\left(  \mathbf{u}\right)  \right\vert }%
-\beta\text{.}%
\]

Clearly, $n^{\ast}\left(  \underline{i}\right)  $ depends on $\ $the\ $\left(
x_{i_{1}},y_{i_{2}}\right)  $'s. We substitute $n^{\ast}\left(  \underline{i}%
\right)  $ in the variance formula, i.e. in%

\[
\sum_{\underline{i}=\underline{0}}^{\underline{l}}\left(  l_{\left(
i_{1},i_{2}\right)  }\left(  \mathbf{u}\right)  \right)  ^{2}\frac{\sigma
^{2}\eta^{2}}{\alpha n\left(  \underline{i}\right)  +\beta},
\]
with $l:=\left(  g_{1}-1\right)  \left(  g_{2}-1\right)  $ to obtain%

\[
Var\left(  m(\mathbf{u)}\right)  =\frac{\sigma^{2}\eta^{2}}{\alpha M_{1}%
+\beta\left(
%TCIMACRO{\tprod \limits_{j=1}^{2}}%
%BeginExpansion
{\textstyle\prod\limits_{j=1}^{2}}
%EndExpansion
g_{j}\right)  }\left(  \sum_{\underline{i}=\underline{0}}^{\underline{l}%
}\left\vert l_{\left(  i_{1},i_{2}\right)  }\left(  \mathbf{u}\right)
\right\vert \right)  ^{2}\text{.}%
\]
The optimal design hence will not depend on the value of $\sigma^{2}\eta^{2}$.
Optimizing with respect to the $\left(  x_{i_{1}},y_{i_{2}}\right)  ^{\prime
}s$ under the constraint $\mathbf{u}$ $\in U$, yields%

\[
\min_{\left\{  \left(  x_{i_{1}},y_{i_{2}}\right)  _{i_{1},i_{2}}\right\}
}\sum_{\underline{i}=\underline{0}}\left\vert l_{\left(  i_{1},i_{2}\right)
}\left(  \mathbf{u}\right)  \right\vert \text{.}%
\]

This is the same as the following two problems with one variable%

\[
\min\sum_{i_{j}=0}^{g_{j}-1}\left\vert l_{\left(  i_{1},i_{2}\right)  }\left(
\mathbf{u}\right)  \right\vert ,\text{ }j=1,2.
\]

With $\mathbf{a}:=(a_{1},a_{2}),$ the minimization is held when $j=1$ on the
abscissas $x_{j}$'s all larger than $a_{1}$ and smaller than $b_{1}$ and on
the ordinates $y_{j}$'s, all larger than $a_{2}$ and smaller than $b_{2}$ when
$j=2$, since%

\begin{align}
\sum_{\underline{i}=\underline{0}}\left\vert l_{\left(  i_{1,i_{2}}\right)
}\left(  \mathbf{u}\right)  \right\vert  & =\sum_{i_{1}=0}^{g_{1}-1}\left\vert
l_{i_{1}}\left(  u_{1}\right)  \right\vert \sum_{i_{k}=0}^{g_{2}-1}\left\vert
l_{i_{2}}\left(  u_{2}\right)  \right\vert \label{plan double}\\
\text{ with }\mathbf{u}\mathbf{:=}  & \left(  u_{1},u_{2}\right)  .\nonumber
\end{align}

Now minimizing the product in (\ref{plan double}) results in two independent
minimizations, one for each of the two factors, under the corresponding
constraint on the respective terms $u_{1}$ and $u_{2}.$ It follows that the
optimal design is the combination of two Hoel Levine marginal optimal designs.

Therefore the solution coincides with the previously obtained one, namely the
Hoel Levine design of Section 2, i.e.%

\[
s_{j}^{\ast}\left(  i_{j}\right)  =\frac{a_{j}+b_{j}}{2}+\frac{b_{j}-a_{j}}%
{2}\cos\left(  \frac{g_{j}-1-i_{j}}{g_{j}-1}\pi\right)  \text{, \ }%
i_{j}=0,...,g_{j}-1,j=1,2
\]
with $S:=\left[  a_{1},b_{1}\right]  \times\left[  a_{2},b_{2}\right]  .$

\bigskip
\bibliographystyle{amsplain}
\bibliography{biblio,biblioBIS,BIBLOIlIVRE}

\end{document}